\newcommand{\mathsym}[1]{{}}
\newcommand{\thmref}[1]{Theorem~\ref{#1}}
\newcommand{\propref}[1]{Proposition~\ref{#1}}
\newcommand{\lemref}[1]{Lemma~\ref{#1}}
\newcommand{\eqnref}[1]{Equation~(\ref{#1})}
\newcommand{\remref}[1]{Remark~\ref{#1}}
\newcommand{\corref}[1]{Corollary~\ref{#1}}
\newcommand{\figref}[1]{Figure~\ref{#1}}
\def\li{L_{i}}
\def\ri{R_{i}}
\def\oga{{\overline{\ga}}}
\def\NN{{\mathbb N}}
\newtheorem{theorem}{Theorem}[section]
\newtheorem{corollary}[theorem]{Corollary}
\newtheorem{lemma}[theorem]{Lemma}
\newtheorem{proposition}[theorem]{Proposition}
\theoremstyle{example}
\newtheorem{remark}[theorem]{Remark}
\theoremstyle{definition}
\theoremstyle{notation}
\newcommand{\hj}[3]{\hat{j}_{#1}(#2,#3)}
\newcommand{\ga}{\Gamma}
\newcommand{\tg}{\tau(\Gamma)}
\newcommand{\ee}[1]{E(#1)}
\newcommand{\vv}[1]{V(#1)}
\newcommand{\va}{\upsilon}
\newcommand{\pp}{p_{i}}
\newcommand{\qq}{q_{i}}
\newcommand{\opp}{\overline{p}_i}
\def\<{\langle }
\def\>{\rangle }
\newcommand{\secref}[1]{\S\ref{#1}}
\def\elg{\ell (\ga)}
\begin{document}

\title[Contraction Formulas For Kirchhoff And Wiener Indices ]
{Contraction Formulas For Kirchhoff And Wiener Indices}



\author{Zubeyir Cinkir}
\address{Zubeyir Cinkir\\
Zirve University\\
Faculty of Education\\
Department of Mathematics\\
Gaziantep\\
TURKEY.}
\email{zubeyirc@gmail.com}

\keywords{Kirchhoff index, Wiener index, metrized graph, contraction formula, tree graph}

\begin{abstract}
We establish several contraction formulas for Kirchhoff index.
We relate Kirchhoff index with some other metrized graph invariants.
By applying our contraction formulas successively when the graph is a tree, we derive new formulas for Wiener index and obtain some
previously known Wiener index formulas with new proofs.
\end{abstract}

\maketitle

\section{Introduction}\label{section introduction}

On a metrized graph $\ga$ with set of vertices $\vv{\ga}$ and resistance function $r(x,y)$, the Kirchhoff index $Kf(\ga)$ is defined as follows:
\begin{equation*}\label{eqn KIndex definition0}
\begin{split}
Kf(\ga)=\frac{1}{2}\sum_{p,\, q \in \vv{\ga}}r(p,q).
\end{split}
\end{equation*}
For the distance function $d(x,y)$ on $\ga$, the Wiener index $W(\ga)$ is defined as:
\begin{equation*}\label{eqn WIndex definition0}
\begin{split}
W(\ga)=\frac{1}{2}\sum_{p,\, q \in \vv{\ga}}d(p,q).
\end{split}
\end{equation*}
These definitions of $Kf(\ga)$ and $W(\ga)$ on a metrized graph $\ga$ agree with their usual definitions on a graph. Next, we briefly describe metrized graphs and some notations we use. Then we give a summary of the results we obtained in this paper.
%
%

A metrized graph $\ga$ is a finite connected graph equipped with a distinguished parametrization of each of its edges.
One can consider $\ga$ as a one-dimensional manifold except at finitely
many "branch points", where it looks locally like an n-pointed star.
A metrized graph $\ga$ can have multiple edges and self-loops.
For any given $p \in \ga$,
the number $\va(p)$ of directions emanating from $p$ will be called the \textit{valence} of $p$.
By definition, there can be only finitely many $p \in \ga$ with $\va(p)\not=2$.

For a metrized graph $\ga$, we will denote a vertex set for $\ga$ by $\vv{\ga}$.
We require that $\vv{\ga}$ be finite and non-empty and that $p \in \vv{\ga}$ for each $p \in \ga$ if $\va(p)\not=2$. For a given metrized graph $\ga$, it is possible to enlarge the
vertex set $\vv{\ga}$ by considering additional valence $2$ points as vertices.

For a given metrized graph $\ga$ with vertex set $\vv{\ga}$, the set of edges of $\ga$ is the set of closed line segments with end points in $\vv{\ga}$. We will denote the set of edges of $\ga$ by $\ee{\ga}$. However, if
$e_i$ is an edge, by $\ga-e_i$ we mean the graph obtained by deleting the {\em interior} of $e_i$.


We denote the length of an edge $e_i \in \ee{\ga}$ by $\li$, which represents a positive real number. The total length of $\ga$, which is denoted by $\elg$, is given by $\elg=\sum_{i=1}^e\li$.

We use the notation $\oga_i$ for the graph obtained by contracting the $i$-th edge
$e_{i}$ of a given metrized graph $\ga$ to its end
points. If $e_{i} \in \ga$ has end points $\pp$ and $\qq$, then in
$\oga_i$, these points become identical, i.e., $\pp=\qq$.
If $p$ is an end point of an edge $e_i$ in $\ga$, then by $p$ in $\vv{\oga_i}$ we mean the
vertex, in $\vv{\oga_i}$, that $p$ is contracted into.

In \secref{sec resistance function}, we briefly describe the voltage and the resistance functions on a metrized graph. We set notations concerning some specific values of these functions and recall some basic results that we use.

In \secref{sec contraction}, we improved the Kirchhoff index formulas we obtained in \cite{C6}. Then
we extended the contraction formulas obtained in \cite{C5} to bridgeless graphs. Using these results, we give a contraction formula for Kirchhoff index that involves another graph invariant $y(\ga)$ (see \eqnref{eqn definition of x and y} for the definition of $y(\ga)$ and \thmref{thm main1} for the contraction formula). This enables us giving lower and upper bounds to Kirchhoff index in terms of $y(\ga)$ and applying contraction formulas for Kirchhoff index successively (see \thmref{thm main2}).

We dealt with tree metrized graphs in \secref{sec tree}. Note that the Kirchhoff index is the same as Wiener index for a tree graph. We restate the results we derived for Kirchhoff index in \secref{sec contraction} for a tree graph. In this way, we obtain contraction formulas for the Wiener index of a tree graph. Moreover, we obtain new formulas, given in \thmref{thm wiener tree2} and \thmref{thm main3 for tree} below, for Wiener index. Our approach enables us to give new proofs of some previously known formulas,  \thmref{thm wiener tree1} and \thmref{thm wiener for tree1 multip}, for Wiener index. Then we give various examples that we apply our formulas to compute Wiener indices.
At the end of \secref{sec tree}, we state two problems. Solution to any of them will be a new proof of a conjecture about Wiener index (see \thmref{thm wiener conj} below).

\section{Resistance Function $r(x,y)$}\label{sec resistance function}

In this section, we briefly describe the resistance and the voltage functions on a metrized graph $\ga$.
We make a review of basic facts about these functions and then set the notation that we use in the rest of the paper.

For any $x$, $y$, $z$ in $\ga$, the voltage function $j_z(x,y)$ on a metrized graph
$\ga$ is a symmetric function in $x$ and $y$, which satisfies
$j_x(x,y)=0$ and $j_z(x,y) \geq 0$ for all $x$, $y$, $z$ in $\ga$.
For each vertex set $\vv{\ga}$, $j_{z}(x,y)$ is
continuous on $\ga$ as a function of all three variables.
For fixed $z$ and $y$ it
has the following physical interpretation: If $\Gamma$ is viewed
as a resistive electric circuit with terminals at $z$ and $y$,
with the resistance in each edge given by its length, then
$j_{z}(x,y)$ is the voltage difference between $x$ and $z$,
when unit current enters at $y$ and exits at $z$ (with reference
voltage $0$ at $z$).

The effective resistance between two points $x, \, y$ of a metrized graph $\ga$ is given by $r(x,y)=j_y(x,x),$
where $r(x,y)$ is the resistance function on $\ga$. The resistance function inherits certain properties of the voltage function.
For any $x$, $y$ in $\ga$,  $r(x,y)$ on
$\ga$ is a symmetric function in $x$ and $y$, and it satisfies
$r(x,x)=0$. For each vertex set $\vv{\ga}$, $r(x,y)$ is
continuous on $\ga$ as a function of two variables and
$r(x,y) \geq 0$ for all $x$, $y$ in $\ga$.
If a metrized graph $\Gamma$ is viewed as a
resistive electric circuit with terminals at $x$ and $y$, with the
resistance in each edge given by its length, then $r(x,y)$ is
the effective resistance between $x$ and $y$ when unit current enters
at $y$ and exits at $x$.

The proofs of the facts mentioned above can be found in \cite{CR} and \cite[sec 1.5 and sec 6]{BRh}.
The voltage function $j_{z}(x,y)$ and the resistance function $r(x,y)$ are also studied in the articles \cite{BF} and \cite{C1}.

We will denote by $R_i$ the resistance between the end points of an edge $e_i$ of a graph $\ga$ when the interior of the edge $e_i$ is deleted from $\ga$.

Let $\ga$ be a metrized graph with $p \in \vv{\ga}$, and let $e_i \in \ee{\ga}$ having end points $\pp$ and $\qq$.
If $\ga -e_i$ is connected, then $\ga$
can be transformed to the graph in Figure \ref{fig 2termp}
by circuit reductions. More details on this fact can be found in the articles \cite{CR} and \cite[Section 2]{C2}.
Note that in \figref{fig 2termp}, we have $R_{a_i,p} = \hj{\pp}{p}{\qq}$,
$R_{b_i,p} = \hj{\qq}{p}{\pp}$, $R_{c_i,p} = \hj{p}{\pp}{\qq}$, where $\hj{x}{y}{z}$
is the voltage function in $\ga-e_i$. We have $R_{a_i,p}+R_{b_i,p}=\ri$ for each $p \in \ga$.
\begin{remark}\label{rem notationRi}
If $\ga-e_i$ is not connected, firstly we set $R_{b_i,p}=\ri$ and $R_{a_i,p}=0$ if $p$ belongs to the component of $\ga-e_i$
containing $\pp$, and we set $R_{a_i,p}=\ri$ and $R_{b_i,p}=0$ if $p$ belongs to the component of $\ga-e_i$
containing $\qq$. Secondly, we mean $\ri \longrightarrow \infty$ in any expression that we use $\ri$.
\end{remark}
We will use these notations for the rest of the paper. Next, we recall a basic identity concerning these values:
\begin{figure}
\centering
\includegraphics[scale=1]{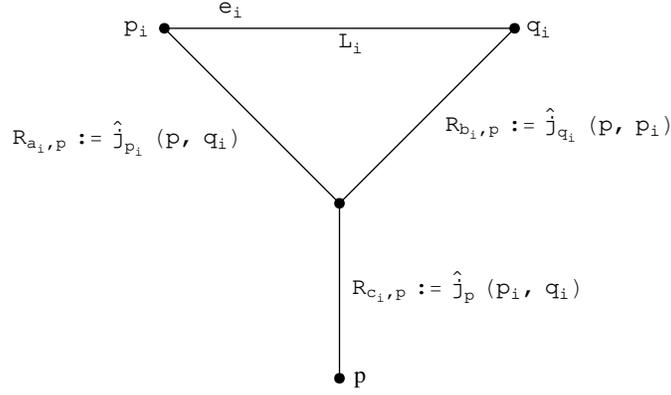} \caption{Circuit reduction of $\ga$ with reference to an edge $e_i$ and a point $p$.} \label{fig 2termp}
\end{figure}

\begin{lemma}\cite[Lemma 2.11]{C4}\label{lemrem2term}
For any $p$ and $q$ in $\vv{\ga}$,
 $$\sum_{e_i \in \, \ee{\ga}}\frac{\li(R_{a_{i},p}-R_{b_{i},p})^2}{(\li+\ri)^2}
=\sum_{e_i \in \, \ee{\ga}}\frac{\li(R_{a_{i},q}-R_{b_{i},q})^2}{(\li+\ri)^2}.$$
\end{lemma}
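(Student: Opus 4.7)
The plan is to identify the sum on each side of the claimed equality, up to a manifestly base-point-independent correction, with the $L^2$-energy $\int_\ga (\partial r(x,p)/\partial x)^2\,dx$, which is a standard metric invariant of $\ga$ (a constant multiple of the Chinburg--Rumely $\tau$-invariant). Once this identification is made, the invariance of the energy under change of base point immediately yields the lemma.

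First I would use the circuit reduction in \figref{fig 2termp}. Parametrize the edge $e_i$ by $t\in[0,L_i]$ with $t=0$ at $\pp$ and $t=L_i$ at $\qq$; then a short series--parallel computation (from $x$ at parameter $t$, the two paths of resistance $t+R_{a_i,p}$ and $L_i-t+R_{b_i,p}$ to the Y-center are in parallel, then in series with $R_{c_i,p}$ to $p$) yields the quadratic formula
\begin{equation*}
r(x,p) = R_{c_i,p} + \frac{(t+R_{a_i,p})(L_i-t+R_{b_i,p})}{L_i+R_i}.
\end{equation*}
Differentiating in $t$ and integrating the square over $e_i$ gives
\begin{equation*}
\int_{e_i}\left(\frac{\partial r(x,p)}{\partial x}\right)^{2}dx = \frac{L_i(R_{a_i,p}-R_{b_i,p})^2}{(L_i+R_i)^2} + \frac{L_i^3}{3(L_i+R_i)^2}.
\end{equation*}

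Summing over $e_i \in \ee{\ga}$ and rearranging,
\begin{equation*}
\sum_{e_i\in\ee{\ga}} \frac{L_i(R_{a_i,p}-R_{b_i,p})^2}{(L_i+R_i)^2} = \int_\ga\left(\frac{\partial r(x,p)}{\partial x}\right)^{2}dx - \sum_{e_i\in\ee{\ga}} \frac{L_i^3}{3(L_i+R_i)^2}.
\end{equation*}
The correction sum on the right is plainly independent of $p$, and the integral is a well-known metric invariant of $\ga$ (a constant multiple of $\tau(\ga)$), hence also independent of $p$. Comparing the identity at $p$ and at $q$ then gives the lemma.

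The main obstacle I anticipate is the base-point invariance of $\int_\ga (\partial_x r(x,p))^2\,dx$ itself. If one does not simply cite the $\tau$-invariant, one proves it directly by writing $E(p)-E(q) = \int_\ga \partial_x[r(x,p)-r(x,q)]\cdot \partial_x[r(x,p)+r(x,q)]\,dx$, identifying $r(x,p)-r(x,q)$ with $r(p,q)-2j_q(x,p)$ (a function harmonic on $\ga\setminus\{p,q\}$), and then applying integration by parts on each edge using the piecewise-quadratic structure of $r(\cdot,p)$ from Step~1; the vertex contributions cancel by the continuity of $r$ and harmonicity of $j_q(\cdot,p)$ at interior vertices. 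A secondary technical point is the treatment of bridge edges (\remref{rem notationRi}): when $\ga-e_i$ is disconnected, $R_i \to \infty$ and the formulas above degenerate consistently, so the identity holds in the limit.
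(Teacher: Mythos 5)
Your proof is correct and takes essentially the same route as the cited source \cite[Lemma 2.11]{C4}, where the sum in question is identified, via exactly your edge-by-edge circuit-reduction computation of $\int_\ga\big(\tfrac{\partial}{\partial x}r(x,p)\big)^2dx$, with $4\tau(\ga)-\tfrac{1}{3}\sum_{e_i\in\ee{\ga}}L_i^3/(L_i+R_i)^2$, so that the claim follows from the base-point independence of the tau constant. Your integration-by-parts verification of that independence (using $r(x,p)-r(x,q)=r(p,q)-2j_q(x,p)$ and the vanishing of $g(p)-g(q)$ for $g(x)=r(x,p)+r(x,q)$) and your check of the bridge degeneration are both sound.
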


In the rest of the paper, for any metrized graph $\ga$ and a fixed vertex $p \in \vv{\ga}$ we will use the following notations, which we first defined in \cite{C7} and used also in \cite{C6}:
\begin{equation}\label{eqn definition of x and y}
\begin{split}
y(\ga)&=\frac{1}{4}\sum_{e_i \, \in \ee{\ga}}\frac{\li
\ri^2}{(\li+\ri)^2}+\frac{3}{4}\sum_{e_i \, \in \ee{\ga}}\frac{\li
(R_{a_i,p}-R_{b_i,p})^2}{(\li+\ri)^2},
\\x(\ga)&=\sum_{e_i \, \in
\ee{\ga}}\frac{\li^2\ri}{(\li+\ri)^2} +\frac{3}{4}\sum_{e_i \, \in
\ee{\ga}}\frac{\li \ri^2}{(\li+\ri)^2}
-\frac{3}{4}\sum_{e_i \, \in \ee{\ga}}\frac{\li
(R_{a_i,p}-R_{b_i,p})^2}{(\li+\ri)^2}.
\end{split}
\end{equation}
Note that $x(\ga)$ and $y(\ga)$ do not depend on the choice of the vertex $p$ \cite[Lemma 2.11]{C2}.
In \cite{C6}, we established connections between Kirchhoff index of $\ga$ and the invariants $x(\ga)$ and $y(\ga)$.

When we use $r_{\beta}(x,y)$, we mean the resistance function in the metrized graph $\beta$.

%
%
%

\section{Contraction Formulas For Kirchhoff Index}\label{sec contraction}

Kirchhoff index of a graph $\ga$, $Kf(\ga)$, is defined \cite{KR} as follows:
\begin{equation}\label{eqn KIndex definition}
\begin{split}
Kf(\ga):=\frac{1}{2}\sum_{p,\, q \in \vv{\ga}}r(p,q).
\end{split}
\end{equation}

The following equality was obtained in \cite[page 4038]{C6}. It gives a relation between the Kirchhoff index of $\ga$ and the Kirchhoff indexes of $\oga_i$'s. Although it is a useful formula to understand how Kirchhoff index changes after edge contractions, we can not use it for successive edge contractions because of some technical problems.
\begin{equation}\label{eqn kirchhoff contraction1}
\begin{split}
(v-2)Kf(\ga)=\sum_{e_i \in \ee{\ga}} \frac{\ri}{\li+\ri} Kf(\oga_i)+\sum_{e_i \in \ee{\ga}} \frac{\ri}{\li+\ri}\sum_{p \in \, \vv{\oga_i}} r_{\oga_i}(p,\opp).
\end{split}
\end{equation}

The idea of tracing the value of a graph invariant after successive edge contractions was successfully applied in \cite{C5}, where we studied the tau constant as an another graph invariant. We want to utilize this idea for Kirchhoff index. To do this, we first need various technical results.

The following lemma is to express $r_{\oga_i}(p,\opp)$ in terms of the resistance values on $\ga$ that we are more familiar.
\begin{lemma}\label{lem res sum}
Let $\ga$ be a metrized graph, and let $p$ be a vertex of $\ga$. For an edge $e_i$ of $\ga$ with end points $\pp$ and $\qq$, we have
$$r(\pp,p)+r(\qq,p)=2r_{\oga_i}(p,\opp)+\frac{\li \ri}{\li+\ri}-2\frac{\li R_{a_{i},p} R_{b_{i},p}}{\ri(\li+\ri)}.$$
\end{lemma}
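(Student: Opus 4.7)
My strategy is to reduce both sides of the claimed identity to explicit expressions in the parameters $R_{a_i,p}$, $R_{b_i,p}$, $R_{c_i,p}$, and $\li$ using the circuit reduction shown in Figure \ref{fig 2termp}. Since the resistance function depends only on the electrical equivalence class, I may replace $\ga$ with the equivalent network on the three terminals $\{p,\pp,\qq\}$ obtained by reducing $\ga-e_i$ with reference to $p$ and then reinserting the edge $e_i$ of length $\li$ between $\pp$ and $\qq$. This gives a $Y$-shaped network with a ``center'' vertex joined to $p$, $\pp$, $\qq$ by resistors of sizes $R_{c_i,p}$, $R_{a_i,p}$, $R_{b_i,p}$, together with a direct edge of length $\li$ between $\pp$ and $\qq$.

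I then read off $r(\pp,p)$ and $r(\qq,p)$ from this picture by series--parallel reductions. For $r(\pp,p)$, the current from the center to $\pp$ may travel directly through $R_{a_i,p}$ or through $R_{b_i,p}$ and then along $e_i$, so these two paths are in parallel with combined resistance $R_{a_i,p}(R_{b_i,p}+\li)/(\li+\ri)$. Together with the symmetric expression for $r(\qq,p)$ and the series resistor $R_{c_i,p}$, this yields
\begin{equation*}
r(\pp,p)+r(\qq,p)=2R_{c_i,p}+\frac{2R_{a_i,p}R_{b_i,p}+\li\ri}{\li+\ri}.
\end{equation*}

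For $r_{\oga_i}(p,\opp)$ the same reduction of $\ga-e_i$ applies to $\oga_i$, but now $\pp$ and $\qq$ are identified, so $R_{a_i,p}$ and $R_{b_i,p}$ sit in parallel between the center and $\opp$; hence $r_{\oga_i}(p,\opp)=R_{c_i,p}+R_{a_i,p}R_{b_i,p}/\ri$. Substituting this into the right-hand side of the lemma and clearing over the common denominator $\ri(\li+\ri)$ reproduces the displayed expression for $r(\pp,p)+r(\qq,p)$; this is a short algebraic check.

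Finally, if $\ga-e_i$ is disconnected (the bridge case), I invoke the conventions of Remark \ref{rem notationRi}. If $p$ lies in the component of $\pp$ then $R_{a_i,p}=0$, $R_{b_i,p}=\ri$, and passing to $\ri\to\infty$ turns the middle term of the right-hand side into $\li$ while the last term vanishes, so the identity collapses to $r(\pp,p)+r(\qq,p)=2r(\pp,p)+\li$, which is immediate since $e_i$ is then a bridge of length $\li$. The symmetric case is identical. I do not foresee a serious obstacle beyond the parallel--series bookkeeping in the generic case.
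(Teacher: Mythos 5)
Your proof is correct and follows essentially the same route as the paper: both arguments reduce to the circuit reduction of Figure~\ref{fig 2termp}, use the identical formulas for $r(\pp,p)$, $r(\qq,p)$ and $r_{\oga_i}(p,\opp)$ (which the paper cites from \cite{C4} and \cite{C7} rather than rederiving by series--parallel reduction), and treat the bridge case separately via the limit conventions of Remark~\ref{rem notationRi}. No gaps.
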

\begin{proof}
We prove this in two cases.

\textbf{Case I:} $e_i$ is not a bridge.

From \cite[Section 2]{C4}, we have
\begin{equation}\label{eqn2term0}
\begin{split}
r(p_i,p)=\frac{(\li+R_{b_i,p})R_{a_i,p}}{\li+\ri}+R_{c_i,p}, \quad
\text{and} \quad r(q_i,p)=\frac{(\li+R_{a_i,p})R_{b_i,p}}{\li+\ri}+R_{c_i,p}.
\end{split}
\end{equation}
Thus,
\begin{equation}\label{eqn2term sum}
\begin{split}
r(\pp,p)+r(\qq,p) = \frac{\li \ri}{\li+\ri}+2 \frac{R_{a_i,p} R_{b_i,p}}{\li+\ri}+2R_{c_i,p}.
\end{split}
\end{equation}
On the other hand, from \cite[Equation 17]{C7} we have
\begin{equation}\label{eqn resistance for contraction graph}
\begin{split}
r_{\oga_i}(p,\opp) = \frac{R_{a_{i},p} R_{b_{i},p}}{\ri} + R_{c_i,p}.
\end{split}
\end{equation}
Thus, the result follows from Equations (\ref{eqn2term sum}) and (\ref{eqn resistance for contraction graph}) in this case.

\textbf{Case II:} $e_i$ is a bridge.

If $p$ belongs to the component of $\ga-e_i$ containing $\pp$, we have $r(\pp,p)+r(\qq,p)=\li+2r(\pp,p)$ and $r_{\oga_i}(p,\opp)=r(\pp,p)$.

If $p$ belongs to the component of $\ga-e_i$ containing $\qq$, we have $r(\pp,p)+r(\qq,p)=\li+2r(\qq,p)$ and $r_{\oga_i}(p,\opp)=r(\qq,p)$.

Now, we note that $\frac{\li \ri}{\li+\ri} \longrightarrow 0$ and $\frac{\li R_{a_{i},p} R_{b_{i},p}}{\ri(\li+\ri)} \longrightarrow 0$ because of \remref{rem notationRi}.

Thus, the result follows in this case, too.

\end{proof}

Now, we can substitute the value of $r_{\oga_i}(p,\opp)$ obtained from \lemref{lem res sum} into the formula given in \eqnref{eqn kirchhoff contraction1}. In this way, we derive a new formula for Kirchhoff index.
\begin{lemma}\label{lem Kirchhoff and res sum}
Let $\ga$ be a metrized graph. We have
\begin{equation*}
\begin{split}
2(v-2)Kf(\ga)&=2\sum_{e_i \in \ee{\ga}} \frac{\ri}{\li+\ri} Kf(\oga_i) + 2v \sum_{e_i \in \ee{\ga}} \frac{\li R_{a_{i},p} R_{b_{i},p}}{(\li+\ri)^2}-v \sum_{e_i \in \ee{\ga}} \frac{\li \ri^2}{(\li+\ri)^2}\\
& \qquad + \sum_{p \in \vv{\ga}}\sum_{e_i \in \ee{\ga}} \frac{\ri}{\li+\ri} (r(\pp,p)+r(\qq,p)).
\end{split}
\end{equation*}
\end{lemma}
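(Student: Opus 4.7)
The plan is to start from equation~(\ref{eqn kirchhoff contraction1}), multiply both sides by $2$, and use Lemma~\ref{lem res sum} to replace the $r_{\oga_i}(p,\opp)$ terms by expressions involving only resistances in the original graph $\ga$.

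First I would rewrite the inner sum $\sum_{p\in\vv{\oga_i}}r_{\oga_i}(p,\opp)$ as $\sum_{p\in\vv{\ga}}r_{\oga_i}(p,\opp)$. Since contracting $e_i$ identifies $\pp$ and $\qq$ with $\opp$, we have $r_{\oga_i}(\pp,\opp)=r_{\oga_i}(\qq,\opp)=r_{\oga_i}(\opp,\opp)=0$, so both sums equal $\sum_{p\in\vv{\ga}\setminus\{\pp,\qq\}}r_{\oga_i}(p,\opp)$. Then I would apply Lemma~\ref{lem res sum} and substitute
\[
2\,r_{\oga_i}(p,\opp)\;=\;r(\pp,p)+r(\qq,p)-\frac{\li\ri}{\li+\ri}+2\,\frac{\li R_{a_i,p}R_{b_i,p}}{\ri(\li+\ri)}.
\]
After summing this over $p\in\vv{\ga}$, multiplying by $\frac{\ri}{\li+\ri}$, and summing over $e_i$, three of the four target terms drop out directly: the Kirchhoff contraction term produces $2\sum_{e_i}\frac{\ri}{\li+\ri}Kf(\oga_i)$, the pair $r(\pp,p)+r(\qq,p)$ produces the final double sum, and the constant $\frac{\li\ri}{\li+\ri}$ contributes $-v\sum_{e_i}\frac{\li\ri^2}{(\li+\ri)^2}$ after collecting the $v$ equal summands over $p$.

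The only subtle step is matching the remaining piece $2\sum_{e_i}\sum_{p\in\vv{\ga}}\frac{\li R_{a_i,p}R_{b_i,p}}{(\li+\ri)^2}$ with the target $2v\sum_{e_i}\frac{\li R_{a_i,p}R_{b_i,p}}{(\li+\ri)^2}$ written for a single free vertex $p$. The key observation, which also justifies that $p$ is free in the statement, is that the per-edge quantity $\sum_{e_i}\frac{\li R_{a_i,p}R_{b_i,p}}{(\li+\ri)^2}$ does not depend on $p$. Using $R_{a_i,p}+R_{b_i,p}=\ri$, one has
\[
4\,R_{a_i,p}R_{b_i,p}\;=\;\ri^{2}-(R_{a_i,p}-R_{b_i,p})^{2},
\]
and Lemma~\ref{lemrem2term} tells us that $\sum_{e_i}\frac{\li(R_{a_i,p}-R_{b_i,p})^2}{(\li+\ri)^2}$ is independent of $p$, from which the claim is immediate. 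Once this is established, the inner sum over $p$ collapses to a factor of $v$, yielding the desired identity. The bridge case is handled automatically because $R_{a_i,p}R_{b_i,p}=0$ in that case, in agreement with Remark~\ref{rem notationRi}; everything else proceeds by rearrangement.
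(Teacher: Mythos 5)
Your proposal is correct and follows essentially the same route as the paper's own proof: substitute Lemma~\ref{lem res sum} into Equation~(\ref{eqn kirchhoff contraction1}), sum over edges and vertices, and use the identity $4R_{a_i,p}R_{b_i,p}=\ri^2-(R_{a_i,p}-R_{b_i,p})^2$ together with Lemma~\ref{lemrem2term} to show that $\sum_{e_i}\frac{\li R_{a_i,p}R_{b_i,p}}{(\li+\ri)^2}$ is independent of $p$, so the inner sum collapses to a factor of $v$. Your explicit justification that the sum over $\vv{\oga_i}$ may be replaced by a sum over $\vv{\ga}$ is a small point the paper leaves implicit, but the argument is otherwise identical.
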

\begin{proof}
Since $\ri = R_{a_{i},p} + R_{b_{i},p}$ for any $p \in \vv{\ga}$, we have
\begin{equation}\label{eqn expand ri}
\begin{split}
\sum_{e_i \in \ee{\ga}} \frac{\li \ri^2}{(\li+\ri)^2}=\sum_{e_i \in \ee{\ga}} \frac{\li (R_{a_{i},p} - R_{b_{i},p})^2}{(\li+\ri)^2}+4 \sum_{e_i \in \ee{\ga}} \frac{\li R_{a_{i},p} R_{b_{i},p}}{(\li+\ri)^2}.
\end{split}
\end{equation}
We note that the left hand side of \eqnref{eqn expand ri} is independent of the choice of the vertex $p$. Likewise, the first term at the right side of \eqnref{eqn expand ri} is independent of $p$ because of \lemref{lemrem2term}. Therefore,
\begin{equation}\label{eqn expand ri same}
\begin{split}
\sum_{e_i \in \ee{\ga}} \frac{\li R_{a_{i},p} R_{b_{i},p}}{(\li+\ri)^2} = \sum_{e_i \in \ee{\ga}} \frac{\li R_{a_{i},q} R_{b_{i},q}}{(\li+\ri)^2}, \qquad \text{for any vertices $p$ and $q$.}
\end{split}
\end{equation}
Now, we first multiply the equality in \lemref{lem res sum} by $\frac{\ri}{\li+\ri}$ and take the summation of the resulting equality over all edges $e_i$ in $\ee{\ga}$. Then we take the summation of the equality obtained over all vertices $p$ in $\vv{\ga}$. Finally, the result follows from \eqnref{eqn expand ri same}, \eqnref{eqn kirchhoff contraction1} and the equality we derived.
\end{proof}

Now, our goal is to simplify the formula we obtained in \lemref{lem Kirchhoff and res sum}. First, we improve a result we derived previously.

The following lemma with the condition that $\ga$ is a bridgeless metrized graph was proved in \cite[Lemma 3.10]{C7}. We note that this condition is not necessary.
\begin{lemma}\label{lem term2}
Let $\ga$ be a metrized graph, and let $\pp$ and $\qq$ be the end points of $e_i \in \ee{\ga}$. For any $p
\in \vv{\ga}$, we have
\begin{equation*}
\begin{split}
\sum_{e_i \in \,
\ee{\ga}}\frac{\li(R_{a_{i},p}-R_{b_{i},p})^2}{(\li+\ri)^2} &
=\sum_{e_i \in \, \ee{\ga}}\frac{\li}{\li + \ri}
\big(r(\pp,p)+r(\qq,p)\big) - \sum_{q \in
\vv{\ga}}(\va(q)-2)r(p,q)\\
& = 2\sum_{q \in \vv{\ga}}r(p,q) -\sum_{e_i \in \,
\ee{\ga}}\frac{\ri}{\li + \ri} \big(r(\pp,p)+r(\qq,p)\big).
\end{split}
\end{equation*}
\end{lemma}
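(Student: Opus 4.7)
The plan proceeds in two steps: first I would show the two displayed equalities are equivalent, then reduce the general case to the bridgeless version established in \cite[Lemma 3.10]{C7}. For the equivalence, each vertex $q \in \vv{\ga}$ is an endpoint of exactly $\va(q)$ edge-ends, so
\[
\sum_{e_i \in \ee{\ga}}(r(\pp,p)+r(\qq,p)) \;=\; \sum_{q \in \vv{\ga}} \va(q)\, r(p,q).
\]
Combining this with $\frac{\li}{\li+\ri}+\frac{\ri}{\li+\ri}=1$, the difference of the two right-hand sides in the lemma telescopes down to $\sum_{e_i}(r(\pp,p)+r(\qq,p)) - \sum_q \va(q) r(p,q) = 0$. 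Hence the two equalities are equivalent, and it suffices to prove the first one.

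For the extension to graphs with bridges, let $B \subseteq \ee{\ga}$ denote the bridges of $\ga$. I would introduce a parallel-edge perturbation $\ga_\epsilon$ in which each bridge $e_i \in B$ (with endpoints $\pp,\qq$) is accompanied by a parallel edge $\tilde e_i$ of length $1/\epsilon$ between $\pp$ and $\qq$. Then $\ga_\epsilon$ is bridgeless, so \cite[Lemma 3.10]{C7} delivers the first equality for $\ga_\epsilon$, and one then sends $\epsilon \to 0^+$. Since each $\tilde e_i$ has resistance tending to infinity, I expect: (a) $r_{\ga_\epsilon}(x,y) \to r_\ga(x,y)$ for every $x, y \in \vv{\ga}$; (b) for each original bridge $e_i$, the $Y$-network reduction of $\ga_\epsilon - e_i$ gives $R^{\ga_\epsilon}_{a_i,p}=0$, $R^{\ga_\epsilon}_{b_i,p}=1/\epsilon$, $R^{\ga_\epsilon}_{c_i,p}=r_\ga(\pp,p)$ when $p$ is in the component of $\pp$ in $\ga-e_i$ (and with the roles of $a,b$ swapped otherwise), recovering the bridge convention of \remref{rem notationRi} in the limit; (c) for non-bridges of $\ga$ the analogous three-terminal quantities vary continuously in $\epsilon$. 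The delicate bookkeeping is that each $\tilde e_i$ contributes $r_\ga(\pp,p)+r_\ga(\qq,p)$ (not zero) to $\sum_i \frac{\li}{\li+\ri}(r(\pp,p)+r(\qq,p))$ in the limit, while the valence at each endpoint of a bridge rises by one in $\ga_\epsilon$, inflating $\sum_q(\va(q)-2)r(p,q)$ by exactly $\sum_{e_i\in B}(r_\ga(p,\pp)+r_\ga(p,\qq))$. These two extra contributions cancel on the right-hand side of the first equality, and the desired identity for $\ga$ emerges.

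The main obstacle will be carrying out this limit passage cleanly: I must verify the explicit $Y$-reduction claimed in (b) so that the bridge convention of \remref{rem notationRi} is genuinely the limit of the parallel-edge regularization, and I must check the aforementioned cancellation term by term. Once these items are in hand, the remaining convergence statements are routine from the continuity of resistance functions under edge-length perturbations.
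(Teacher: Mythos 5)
Your strategy is sound but genuinely different from the paper's. The paper does not treat the bridgeless statement of \cite[Lemma 3.10]{C7} as a black box; instead it re-runs that proof's internal computation for a graph with bridges, the only new input being the explicit behaviour of $r(p,x)$ for $x$ on a bridge (namely $r(p,x)=r(p,\pp)+x$ or $r(p,\qq)+\li-x$, so that $\tfrac{d^2}{dx^2}r(p,x)=0$ there and $r(p,\pp)-r(p,\qq)=\mp\li$), combined with the conventions of \remref{rem notationRi} and the observation that the auxiliary results \cite[Lemma 3.6, Equation (14), Proposition 3.9]{C7} already hold in the presence of bridges. You instead regularize: replace each bridge by a bridge plus a parallel edge of length $1/\epsilon$, invoke the bridgeless lemma, and pass to the limit. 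Your reduction of the second displayed equality to the first via the handshake identity $\sum_{e_i}(r(\pp,p)+r(\qq,p))=\sum_q \va(q)r(p,q)$ is correct, and the cancellation you flag does occur: each $\tilde e_i$ contributes $r(\pp,p)+r(\qq,p)$ through $\tfrac{\tilde L_i}{\tilde L_i+\tilde R_i}\to 1$ while contributing nothing to the left-hand side (since $\tilde R_i\le\li$ stays bounded), and this exactly offsets the inflation of $\sum_q(\va(q)-2)r(p,q)$ caused by the raised valences. The limit claims in (b) also check out, and in fact more cleanly than you anticipate: since any other bridge of $\ga$ lies entirely in one component of $\ga-e_i$, the unique edge of $\ga_\epsilon-e_i$ joining the two components is $\tilde e_i$ itself, so no current circulates in the component containing $p$ and one gets $R^{\ga_\epsilon}_{a_i,p}=0$ exactly, with $R^{\ga_\epsilon}_{b_i,p}=R_i^{\ga_\epsilon}\to\infty$ and $R^{\ga_\epsilon}_{c_i,p}\to r_\ga(\pp,p)$; for non-bridges, Rayleigh monotonicity gives the continuity in (c). The trade-off: the paper's route requires reopening the proof of the cited lemma but yields the identity directly in the conventions of \remref{rem notationRi}, whereas yours keeps that proof sealed at the cost of a limit argument whose convergence and cancellation bookkeeping you would still need to write out in full.
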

\begin{proof}
The proof is almost the same as the proof of \cite[Lemma 3.10]{C7}. The only additional work is to use the following facts for edges that are bridges (edges whose removal disconnects the graph).

\begin{figure}
\centering
\includegraphics[scale=0.8]{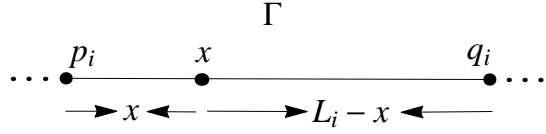} \caption{ $\ga$ with $x \in e_i$, where $e_i$ is a bridge.} \label{fig bridge1}
\end{figure}
Let $e_i \in \ee{\ga}$ be a bridge, and let $p \in \vv{\ga}$. Suppose $x \in e_i$ is as in \figref{fig bridge1} and that $e_i$ has end points $\pp$ and $\qq$.
If $p$ belongs to the component of $\ga-e_i$ containing $\pp$, we have
\begin{equation}\label{eqn bridge1}
\begin{split}
r(p,x)=r(p,\pp)+x, \qquad \frac{d}{dx}r(p,x)=1, \quad \text{and  } r(p,\pp)-r(p,\qq)=-\li.
\end{split}
\end{equation}
If $p$ belongs to the component of $\ga-e_i$ containing $\qq$, we have
\begin{equation}\label{eqn bridge2}
\begin{split}
r(p,x)=r(p,\qq)+\li-x, \qquad \frac{d}{dx}r(p,x)=-1 \quad \text{and  } r(p,\pp)-r(p,\qq)=\li.
\end{split}
\end{equation}
Thus, in any case $\frac{d^2}{dx^2}r(p,x)=0$ if $x$ belongs to a bridge.

We note that \cite[Lemma 3.6]{C7}, \cite[Equation (14)]{C7} and \cite[Proposition 3.9]{C7} are valid
for metrized graphs with possibly bridges.

If we consider \remref{rem notationRi} along with Equations (\ref{eqn bridge1}) and (\ref{eqn bridge2}),
the proof of \cite[Lemma 3.10]{C7} can be extended to the case $\ga$ with bridges.
\end{proof}
\lemref{lem term2} is crucial for our purposes.
\begin{lemma}\label{lem kirchhoff and term2}
For any metrized graph $\ga$, we have
\begin{equation*}\label{eqn kirchhoff and term2}
\begin{split}
4Kf(\ga)=v \cdot \sum_{e_i \in \,
\ee{\ga}}\frac{\li(R_{a_{i},p}-R_{b_{i},p})^2}{(\li+\ri)^2}
+ \sum_{e_i \in \,
\ee{\ga}}\frac{\ri}{\li + \ri} \sum_{p \in \vv{\ga}}\big(r(\pp,p)+r(\qq,p)\big).
\end{split}
\end{equation*}
\end{lemma}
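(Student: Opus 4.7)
The plan is to obtain the identity by summing the second equality of \lemref{lem term2} over all vertices $p \in \vv{\ga}$, exploiting two key facts already established in the excerpt: the quantity $\sum_{e_i}\frac{\li(R_{a_{i},p}-R_{b_{i},p})^2}{(\li+\ri)^2}$ does not depend on the reference vertex $p$ (by \lemref{lemrem2term}, noting also the remark right after \eqnref{eqn definition of x and y}), and $\sum_{p \in \vv{\ga}} \sum_{q \in \vv{\ga}} r(p,q) = 2 Kf(\ga)$ by the definition in \eqnref{eqn KIndex definition}.

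Concretely, first I would take the second line of the conclusion of \lemref{lem term2},
\begin{equation*}
\sum_{e_i \in \ee{\ga}}\frac{\li(R_{a_{i},p}-R_{b_{i},p})^2}{(\li+\ri)^2} = 2\sum_{q \in \vv{\ga}} r(p,q) - \sum_{e_i \in \ee{\ga}} \frac{\ri}{\li+\ri}\bigl(r(\pp,p)+r(\qq,p)\bigr),
\end{equation*}
and sum both sides over $p \in \vv{\ga}$. On the left, since the sum is $p$-independent, the result is $v$ times that single quantity, matching the first term on the right-hand side of the desired identity. On the right, the first piece becomes $2\sum_{p,q \in \vv{\ga}} r(p,q) = 4 Kf(\ga)$, and the second piece becomes, after swapping the order of summation, exactly $\sum_{e_i}\frac{\ri}{\li+\ri}\sum_{p \in \vv{\ga}}(r(\pp,p)+r(\qq,p))$. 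Rearranging to put $4Kf(\ga)$ alone on the left gives the stated formula.

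There is essentially no obstacle here beyond verifying that the $p$-independence argument is legitimate in the presence of possible bridges; this is precisely what the improved \lemref{lemrem2term} and the newly strengthened \lemref{lem term2} (now valid without the bridgeless hypothesis) provide. The rest is just interchange of finite sums and the definition of $Kf(\ga)$.
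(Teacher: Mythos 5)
Your proposal is correct and follows the paper's own argument exactly: sum the second identity of \lemref{lem term2} over $p\in\vv{\ga}$, use \lemref{lemrem2term} to replace the $p$-independent left-hand side by $v$ times a single copy, and identify $2\sum_{p,q}r(p,q)$ with $4Kf(\ga)$ via \eqnref{eqn KIndex definition}. No further comment is needed.
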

\begin{proof}
First, we take summation of the second equality in \lemref{lem term2} over all vertices $p \in \vv{\ga}$:
\begin{equation*}\label{eqn kirchhoff and term2a}
\begin{split}
\sum_{p \in \vv{\ga}} \sum_{e_i \in \,
\ee{\ga}}\frac{\li(R_{a_{i},p}-R_{b_{i},p})^2}{(\li+\ri)^2}
= 2\sum_{p, \, q \in \vv{\ga}} r(p,q) -\sum_{e_i \in \,
\ee{\ga}}\frac{\ri}{\li + \ri} \sum_{p \in \vv{\ga}}\big(r(\pp,p)+r(\qq,p)\big).
\end{split}
\end{equation*}
Then the result follows from this equality, \lemref{lemrem2term} and the definition of $Kf(\ga)$.
\end{proof}

After having various technical lemmas,  we can state our first
main result. It describes the relation between the Kirchhoff index of $\ga$ and the Kirchhoff indexes of each
of $\oga_i$ that are obtained by contraction of $e_i \in \ee{\ga}$:
\begin{theorem}\label{thm main1}
Let $\ga$ be a metrized graph with at least $4$ vertices. Then we have
$$(v-4)Kf(\ga)=\sum_{e_i \in \ee{\ga}} \frac{\ri}{\li+\ri} Kf(\oga_i) - v \cdot y(\ga).$$
\end{theorem}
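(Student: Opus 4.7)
The plan is to combine the two main identities we have already established, Lemma \ref{lem Kirchhoff and res sum} and Lemma \ref{lem kirchhoff and term2}, so as to eliminate the mixed term
$$\sum_{p \in \vv{\ga}} \sum_{e_i \in \ee{\ga}} \frac{\ri}{\li+\ri}\bigl(r(\pp,p)+r(\qq,p)\bigr),$$
which appears in both, and then to recognize what is left as a multiple of $y(\ga)$.

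First, I would solve Lemma \ref{lem kirchhoff and term2} for the mixed resistance-sum term, yielding
$$\sum_{e_i \in \ee{\ga}}\frac{\ri}{\li + \ri} \sum_{p \in \vv{\ga}}\bigl(r(\pp,p)+r(\qq,p)\bigr) = 4Kf(\ga) - v \sum_{e_i \in \ee{\ga}}\frac{\li(R_{a_{i},p}-R_{b_{i},p})^2}{(\li+\ri)^2},$$
and substitute this expression into the identity of Lemma \ref{lem Kirchhoff and res sum}. Collecting the $Kf(\ga)$ terms produces $2(v-4)Kf(\ga)$ on the left, while on the right I obtain $2\sum_{e_i} \frac{\ri}{\li+\ri} Kf(\oga_i)$ together with a combination of the three edge-sums
$$S_1 = \sum_{e_i}\frac{\li \ri^2}{(\li+\ri)^2}, \quad S_2 = \sum_{e_i}\frac{\li(R_{a_{i},p}-R_{b_{i},p})^2}{(\li+\ri)^2}, \quad S_3 = \sum_{e_i}\frac{\li R_{a_{i},p}R_{b_{i},p}}{(\li+\ri)^2}.$$

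The next step is to eliminate $S_3$ using identity (\ref{eqn expand ri}), which says $S_1 = S_2 + 4 S_3$, so $2S_3 = \tfrac{1}{2}S_1 - \tfrac{1}{2}S_2$. Plugging this in and simplifying the coefficients, the residual edge-sum contribution becomes $-2v\bigl(\tfrac{1}{4}S_1 + \tfrac{3}{4}S_2\bigr)$, which is exactly $-2v\cdot y(\ga)$ by the definition of $y(\ga)$ in \eqnref{eqn definition of x and y}. Dividing through by $2$ gives the desired contraction formula.

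The main obstacle is really just bookkeeping: tracking the coefficients of $S_1$, $S_2$, $S_3$ correctly through the substitution so that the final combination matches the particular weights $\tfrac{1}{4}$ and $\tfrac{3}{4}$ appearing in the definition of $y(\ga)$. The hypothesis $v \geq 4$ plays no role in the algebra itself; it only guarantees that the formula is a genuine relation rather than a trivial one (and prevents the factor $v-4$ from being negative when one wishes to read the identity as an upper bound). One minor point requiring care is that the sums $S_2$ and $S_3$ a priori depend on the choice of the auxiliary vertex $p$, but Lemma \ref{lemrem2term} handles $S_2$ and the identity (\ref{eqn expand ri same}) derived in the proof of Lemma \ref{lem Kirchhoff and res sum} handles $S_3$; so the final identity is intrinsic to $\ga$, as it must be.
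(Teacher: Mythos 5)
Your proposal is correct and follows essentially the same route as the paper's own proof: eliminating the mixed sum $\sum_{e_i}\frac{\ri}{\li+\ri}\sum_{p}(r(\pp,p)+r(\qq,p))$ by subtracting \lemref{lem kirchhoff and term2} from \lemref{lem Kirchhoff and res sum}, and then invoking \eqnref{eqn expand ri} together with the definition of $y(\ga)$ in \eqnref{eqn definition of x and y}. Your coefficient bookkeeping is accurate: the residual edge-sums combine to $-\tfrac{v}{2}S_1-\tfrac{3v}{2}S_2=-2v\cdot y(\ga)$, and dividing $2(v-4)Kf(\ga)=2\sum_{e_i}\frac{\ri}{\li+\ri}Kf(\oga_i)-2v\cdot y(\ga)$ by $2$ gives the theorem.
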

\begin{proof}
 We first subtract the equality given in \lemref{lem kirchhoff and term2} from the equality given in \lemref{lem Kirchhoff and res sum}.
 Then the proof follows from \eqnref{eqn expand ri} and \eqnref{eqn definition of x and y}.
\end{proof}
Next, we have another formula for Kirchhoff index.
\begin{proposition}\label{prop contraction for Kirchhoff}
For any metrized graph $\ga$ with $v$ vertices, we have
$$2Kf(\ga)= v \cdot y(\ga)+\sum_{e_i \in \ee{\ga}} \frac{\ri}{\li+\ri}\sum_{p \in \, \vv{\oga_i}} r_{\oga_i}(p,\opp).$$
\end{proposition}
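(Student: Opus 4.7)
The plan is to obtain the formula as a direct consequence of the two Kirchhoff index identities that are already in place, namely \eqnref{eqn kirchhoff contraction1} and \thmref{thm main1}. Both of these expressions involve the same sum $\sum_{e_i \in \ee{\ga}} \frac{\ri}{\li+\ri}Kf(\oga_i)$, so the strategy is simply to eliminate this common term by subtraction.

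More concretely, I would first write down
\begin{equation*}
(v-2)Kf(\ga)=\sum_{e_i \in \ee{\ga}} \frac{\ri}{\li+\ri} Kf(\oga_i)+\sum_{e_i \in \ee{\ga}} \frac{\ri}{\li+\ri}\sum_{p \in \, \vv{\oga_i}} r_{\oga_i}(p,\opp),
\end{equation*}
and
\begin{equation*}
(v-4)Kf(\ga)=\sum_{e_i \in \ee{\ga}} \frac{\ri}{\li+\ri} Kf(\oga_i) - v \cdot y(\ga).
\end{equation*}
Subtracting the second equality from the first cancels the $Kf(\oga_i)$ terms and leaves
\begin{equation*}
2Kf(\ga)=v\cdot y(\ga)+\sum_{e_i \in \ee{\ga}} \frac{\ri}{\li+\ri}\sum_{p \in \, \vv{\oga_i}} r_{\oga_i}(p,\opp),
\end{equation*}
which is the desired identity.

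There is essentially no obstacle here beyond justifying that both identities hold under the stated hypothesis. \thmref{thm main1} requires $v \geq 4$, but since $v$ appears only as a coefficient in the linear combination $(v-2)-(v-4)=2$, the final identity itself is independent of this restriction. For the remaining low-vertex cases one either observes that both \eqnref{eqn kirchhoff contraction1} and the proof chain leading to \thmref{thm main1} hold as algebraic identities (the derivation of \thmref{thm main1} consisted of subtracting \lemref{lem kirchhoff and term2} from \lemref{lem Kirchhoff and res sum}, neither of which required $v \geq 4$), or one enlarges $\vv{\ga}$ by adding enough valence two points to invoke the theorem directly; both sides of the conclusion are unchanged by adding valence two vertices, which finishes the proof.
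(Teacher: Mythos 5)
Your proposal is correct and is essentially identical to the paper's own proof, which likewise obtains the identity by subtracting the formula of \thmref{thm main1} from \eqnref{eqn kirchhoff contraction1}. Your additional remark about the low-vertex cases (where \thmref{thm main1} formally requires $v \geq 4$ but the proposition is stated for all $v$) is a careful touch that the paper itself does not spell out.
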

\begin{proof}
The result is obtained by subtracting the formula in \thmref{thm main1} from \eqnref{eqn kirchhoff contraction1}.
\end{proof}
Note that \thmref{thm main1} is more advantageous to work with than \eqnref{eqn kirchhoff contraction1}, because we studied the term $y(\ga)$ previously \cite{C5} and showed that it has various properties.

Our goal for the rest of this section is to apply the contraction formula given in \thmref{thm main1} successively. To do this, we need the contraction formula for $y(\ga)$ for any metrized graph $\ga$ (see \thmref{thm contraction} below).  The contraction formula of $y(\ga)$ for bridgeless metrized graphs was shown in \cite[Theorem 4.12]{C5}. First, we need some preparatory work.

The following theorem was given in \cite[Theorem 4.8]{C6}. Note that we don't need the condition bridgeless as explained in the paragraph before the theorem in that paper (and as its proof shows). That is, we can give \cite[Theorem 4.8]{C6} with a minor correction in its statement as follows:
\begin{theorem}\label{thm resistance deletion and contraction ids3}
Let $\ga$ be a metrized graph. For any two vertices $p$ and $q$, we have
\begin{equation*}\label{eqn contraction general}
\begin{split}
(v-2) r(p,q)= \sum_{e_i \in \ee{\ga}} \frac{\ri}{\li+\ri} r_{\oga_i}(p,q).
\end{split}
\end{equation*}
\end{theorem}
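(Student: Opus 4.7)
The plan is to take the proof of Theorem 4.8 in \cite{C6} essentially verbatim, since the bridgeless hypothesis used there is a matter of presentation rather than a genuine restriction. The argument uses the circuit reduction of Figure \ref{fig 2termp}, which I would perform once with reference point $p$ and once with reference point $q$, combining the two to express $r_{\oga_i}(p,q)$ explicitly as a rational function of $R_{a_i,p}, R_{b_i,p}, R_{c_i,p}$, $\li$, and $\ri$. This generalizes \eqnref{eqn resistance for contraction graph}, which is the special case $q=\opp$. After multiplying by $\frac{\ri}{\li+\ri}$ and summing over all non-bridge edges, the auxiliary $R$-terms should collapse by \lemref{lemrem2term} applied to the pair $(p,q)$, leaving $(v-2)r(p,q)$ on the right-hand side. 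This mirrors the mechanism used in the derivation of \eqnref{eqn kirchhoff contraction1}, applied to a single vertex pair rather than to the full double sum.

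To handle edges that are bridges, I would invoke the limiting conventions of Remark \ref{rem notationRi}: the factor $\frac{\ri}{\li+\ri}$ becomes $1$, and by Equations \eqnref{eqn bridge1} and \eqnref{eqn bridge2} we have $r_{\oga_i}(p,q)=r(p,q)-\li$ if $e_i$ lies on the $p$-$q$ path and $r_{\oga_i}(p,q)=r(p,q)$ otherwise. A direct substitution shows that these are exactly the values the non-bridge formula predicts in the limit $\ri\to\infty$, so bridges slot into the same identity without a separate case analysis. This is the reason the author can drop the bridgeless hypothesis from the statement of \cite[Theorem 4.8]{C6}.

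The main obstacle will be the final telescoping step: verifying that the weighted sum actually collapses to the coefficient $(v-2)r(p,q)$ once the auxiliary cross-terms are eliminated. I expect this to follow from the same Foster-type counting that produces the coefficient $v-2$ in \eqnref{eqn kirchhoff contraction1}, but adapted so that it tracks the $(p,q)$-dependence of the weights rather than averaging over the full double sum $\sum_{p,q}r(p,q)$. Since all of the needed tools — the circuit reduction of Figure \ref{fig 2termp}, the symmetry identity \lemref{lemrem2term}, and the bridge conventions of Remark \ref{rem notationRi} — are already in place, no new machinery should be required beyond what has already been developed.
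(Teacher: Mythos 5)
Your proposal takes essentially the same route as the paper, which offers no argument for this theorem beyond citing \cite[Theorem 4.8]{C6} and asserting that its proof, read with the limiting conventions of \remref{rem notationRi}, goes through for graphs with bridges. Your explicit bridge analysis --- $\frac{\ri}{\li+\ri}\to 1$, with $r_{\oga_i}(p,q)$ equal to $r(p,q)-\li$ or $r(p,q)$ according as $e_i$ does or does not separate $p$ from $q$ --- is correct and is the same mechanism the paper itself uses for bridges elsewhere (e.g.\ Case II of the proof of \lemref{lem res sum}).
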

Next, we apply \thmref{thm resistance deletion and contraction ids3} to the sum of effective resistances along with all edges.
Let $$r(\ga):=\sum_{e_i \in \ee{\ga}}\frac{\li \ri}{\li+\ri}.$$
Note that $r(\pp,\qq)=\frac{\li \ri}{\li+\ri}$ for any edge $e_i$ with end points $\pp$ and $\qq$.
\begin{theorem}\label{thm contraction for r}
Let $\ga$ be a metrized graph. Then, we have
\begin{equation*}\label{eqn contraction for r}
\begin{split}
(v-2) r(\ga)= \sum_{e_i \in \ee{\ga}} \frac{\ri}{\li+\ri} r(\oga_i).
\end{split}
\end{equation*}
\end{theorem}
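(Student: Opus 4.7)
The plan is to apply \thmref{thm resistance deletion and contraction ids3} in a ``row-by-row'' fashion, once for every edge of $\ga$, and then sum the resulting identities. Concretely, for each edge $e_j \in \ee{\ga}$ with endpoints $\pj$ and $q_j$, I would specialize the theorem to the vertex pair $(p,q) = (\pj,q_j)$, obtaining
\begin{equation*}
(v-2)\, r(\pj,q_j) \;=\; \sum_{e_i \in \ee{\ga}} \frac{\ri}{\li+\ri}\, r_{\oga_i}(\pj,q_j).
\end{equation*}
Recalling that $r(\pj,q_j) = \frac{L_j R_j}{L_j+R_j}$ by circuit reduction, summing the left-hand side over $j$ reproduces exactly $(v-2)\,r(\ga)$.

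Next, I would swap the order of summation on the right-hand side, so that the outer sum runs over $e_i$ and the inner sum runs over $e_j$:
\begin{equation*}
(v-2)\, r(\ga) \;=\; \sum_{e_i \in \ee{\ga}} \frac{\ri}{\li+\ri} \sum_{e_j \in \ee{\ga}} r_{\oga_i}(\pj,q_j).
\end{equation*}
The task then reduces to identifying the inner sum with $r(\oga_i)$. There are two things to check: when $j=i$, the endpoints $\pp$ and $\qq$ are contracted to a single vertex $\opp$ of $\oga_i$, so $r_{\oga_i}(\pp,\qq) = 0$; when $j \neq i$, the edge $e_j$ is still an edge of $\oga_i$ with unchanged length $L_j$, hence $r_{\oga_i}(\pj, q_j) = \frac{L_j R_j^{\oga_i}}{L_j+R_j^{\oga_i}}$, where $R_j^{\oga_i}$ is the resistance between the endpoints of $e_j$ in $\oga_i - e_j$. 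Summing over $j \neq i$ yields precisely $r(\oga_i)$ by the definition of $r(\cdot)$.

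Putting these observations together, $\sum_{e_j \in \ee{\ga}} r_{\oga_i}(\pj,q_j) = r(\oga_i)$, and the theorem follows. The only place where any care is required is the bookkeeping around degenerate edges: for a self-loop $e_i$ one has $R_i = 0$, so its contribution on both sides vanishes; for a bridge $e_i$ one has $R_i \to \infty$ per \remref{rem notationRi}, so $\frac{R_i}{L_i+R_i} \to 1$ and $\frac{L_i R_i}{L_i+R_i} \to L_i$, which remains consistent. I do not expect a serious obstacle here, since \thmref{thm resistance deletion and contraction ids3} is already stated without any bridgeless hypothesis; the main conceptual step is simply recognizing that ``applying the resistance contraction identity to each edge of $\ga$ and summing'' packages the individual resistances into the aggregate invariant $r(\ga)$ on the left and $r(\oga_i)$ on the right.
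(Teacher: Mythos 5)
Your proposal is correct and follows essentially the same route as the paper: apply Theorem~\ref{thm resistance deletion and contraction ids3} to the endpoint pair of each edge, sum over all edges, interchange the two sums, and use $r_{\oga_i}(\pp,\qq)=0$ to identify the inner sum with $r(\oga_i)$. The extra bookkeeping you note for self-loops and bridges is consistent with the paper's conventions and does not change the argument.
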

\begin{proof}
Let $e_j$ be an edge with end points $p_j$ and $q_j$. Applying \thmref{thm resistance deletion and contraction ids3} to the vertices
$p_j$ and $q_j$ gives
\begin{equation*}\label{eqn contraction generalej}
\begin{split}
(v-2) r(p_j,q_j)= \sum_{e_i \in \ee{\ga}} \frac{\ri}{\li+\ri} r_{\oga_i}(p_j,q_j).
\end{split}
\end{equation*}
where $v$ is the number of vertices in $\ga$.
Now, if we take the summation of above equality over all edges $e_j$ in $\ga$ and use the definition of $r(\ga)$, we obtain
\begin{equation*}\label{eqn contraction generalej2}
\begin{split}
(v-2) r(\ga) &= \sum_{e_i \in \ee{\ga}} \frac{\ri}{\li+\ri} \sum_{e_j \in \ee{\ga}} r_{\oga_i}(p_j,q_j)\\
             &= \sum_{e_i \in \ee{\ga}} \frac{\ri}{\li+\ri} \sum_{e_j \in \ee{\oga_i}} r_{\oga_i}(p_j,q_j), \qquad \text{since $r_{\oga_i}(p_i,q_i)=0$}.\\
             &= \sum_{e_i \in \ee{\ga}} \frac{\ri}{\li+\ri} r(\oga_i).
\end{split}
\end{equation*}
This gives what we want to show.
\end{proof}
Note that \thmref{thm contraction for r} for bridgeless metrized graphs was given in \cite[Corollary 4.13]{C5}. But we show here that it holds for any metrized graphs possibly with bridges.

Similarly, the following theorem for bridgeless metrized graphs was given in \cite[Theorem 4.12]{C5}.
\begin{theorem}\label{thm contraction}
Let $\ga$ be a metrized graph with $v$ vertices. Then we have
\begin{align*}
(v-2) x(\ga) &=\sum_{e_i \in \ee{\ga}}\frac{\ri}{\li+\ri}x(\oga_i), & \text{  and   } \quad
(v-2) y(\ga) &=\sum_{e_i \in \ee{\ga}}\frac{\ri}{\li+\ri}y(\oga_i).
\end{align*}
\end{theorem}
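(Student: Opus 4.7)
The plan is to reduce the statement to its $y$-part via the algebraic identity $x(\ga)+y(\ga)=r(\ga)$. A direct calculation from \eqnref{eqn definition of x and y} shows that when the two expressions defining $x(\ga)$ and $y(\ga)$ are added, the $\frac{\li(R_{a_i,p}-R_{b_i,p})^{2}}{(\li+\ri)^2}$ contributions cancel (they appear with coefficients $+\tfrac{3}{4}$ and $-\tfrac{3}{4}$), and what remains collapses to
$\sum_{e_i\in\ee{\ga}}\frac{\li^2\ri+\li\ri^2}{(\li+\ri)^{2}}=\sum_{e_i\in\ee{\ga}}\frac{\li\ri}{\li+\ri}=r(\ga)$.
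Combined with \thmref{thm contraction for r}, the two equalities asserted in the theorem are equivalent: subtracting the $y$-formula from the $r$-formula yields the $x$-formula, and vice versa. So it suffices to establish $(v-2)y(\ga)=\sum_{e_i\in\ee{\ga}}\tfrac{\ri}{\li+\ri}y(\oga_i)$.

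The bridgeless case of the $y$-contraction is \cite[Theorem~4.12]{C5}. My strategy for graphs possibly with bridges is to re-run that argument and verify that every ingredient it uses is now available without the bridgeless hypothesis. Precisely, \lemref{lem res sum} (Case~II), the extended \lemref{lem term2}, and \thmref{thm resistance deletion and contraction ids3} have all been proved in this paper for arbitrary metrized graphs, and these are the same identities that drive the bridgeless proof. Under the conventions of \remref{rem notationRi}, for a bridge edge $e_i$ we have $\tfrac{\ri}{\li+\ri}\to 1$, and terms like $\tfrac{\li R_{a_i,p}R_{b_i,p}}{\ri(\li+\ri)}$ vanish, so each manipulation of the bridgeless proof admits a well-defined bridge analogue.

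The main obstacle is the bookkeeping of bridge contributions on both sides of the equation. For a bridge edge $e_i$, the coefficient $\tfrac{\ri}{\li+\ri}$ on the right side is interpreted as $1$ in the $\ri\to\infty$ limit; simultaneously the corresponding $e_i$-summand inside $y(\ga)$, namely $\tfrac{1}{4}\tfrac{\li\ri^{2}}{(\li+\ri)^{2}}+\tfrac{3}{4}\tfrac{\li(R_{a_i,p}-R_{b_i,p})^{2}}{(\li+\ri)^{2}}$, collapses to $\li$ (since \remref{rem notationRi} forces one of $R_{a_i,p},R_{b_i,p}$ to equal $\ri$ and the other to equal $0$). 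The task is to confirm term by term that these limits balance on the two sides once one expands $y(\oga_i)$ on the right. Once this is verified, the $y$-contraction is established for all metrized graphs, and the $x$-version follows from the reduction above.
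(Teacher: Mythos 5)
Your opening reduction is sound and coincides with the paper's own device: since the $\frac{\li(R_{a_i,p}-R_{b_i,p})^2}{(\li+\ri)^2}$ terms cancel in $x(\ga)+y(\ga)$ and the remainder collapses to $r(\ga)$, \thmref{thm contraction for r} does make the $x$-identity and the $y$-identity equivalent (the paper uses exactly this, in the direction $x\Rightarrow y$). The gap is in the half you elect to prove directly. Your plan for the $y$-identity is to ``re-run'' the bridgeless proof of \cite[Theorem 4.12]{C5} and then ``confirm term by term that these limits balance'' --- but that confirmation \emph{is} the content of the theorem for graphs with bridges, and it is nowhere carried out. Moreover, your list of ingredients (\lemref{lem res sum} Case II, \lemref{lem term2}, \thmref{thm resistance deletion and contraction ids3}) is a conjecture about the internal structure of an external proof; nothing in the present paper certifies that these are what drives \cite[Theorem 4.12]{C5}, so ``every ingredient is now available'' is not established.

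The idea you are missing is that one should attack $x$, not $y$, because $x$ is insensitive to bridges. By \remref{rem notationRi}, a bridge $e_i$ contributes $0$ to $x(\ga)$ (its three summands tend to $0$, $\tfrac{3}{4}\li$ and $-\tfrac{3}{4}\li$), whereas it contributes $\li$ to $y(\ga)$. Hence, writing $\beta$ for $\ga$ with all $t$ bridges contracted, one gets $x(\ga)=x(\beta)$, $x(\oga_i)=x(\beta)$ for $e_i$ a bridge, and $x(\oga_i)=x(\overline{\beta}_i)$ for $e_i$ a non-bridge; the sum $\sum_i \frac{\ri}{\li+\ri}x(\oga_i)$ then splits as $t\cdot x(\beta)$ plus the bridgeless theorem applied to $\beta$ (which has $v-t$ vertices), giving $(v-t-2)x(\beta)+t\,x(\beta)=(v-2)x(\ga)$. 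The external theorem is used as a black box and never reopened. Your $y$-first route can in principle be completed --- one needs $y(\oga_i)=y(\ga)-\li$ for bridges, $y(\oga_j)=y(\overline{\beta}_j)+\ell(B(\ga))$ for non-bridges (where $\ell(B(\ga))$ is the total bridge length), and the Foster-type identity $\sum_{\text{non-bridge}}\frac{R_j}{L_j+R_j}=v-t-1$ to make the $\ell(B(\ga))$ terms balance --- but none of this bookkeeping appears in your proposal, and it is precisely the part you yourself flag as ``the main obstacle.''
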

\begin{proof}
Let $B(\ga)=\{ e_{i_1}, \, e_{i_2}, \dots, e_{i_t}  \}$ be the set of bridges in $\ga$.
Let $\beta$ be the metrized graph obtained from $\ga$ by contracting all bridges in $\ga$.

We first note that if $e_i$ is a bridge, using \remref{rem notationRi} we obtain
$\frac{\li^2\ri}{(\li+\ri)^2} \longrightarrow 0$, $\frac{\li \ri^2}{(\li+\ri)^2} \longrightarrow \li$
and $\frac{\li(R_{a_i,p}-R_{b_i,p})^2}{(\li+\ri)^2} \longrightarrow \li$. Therefore, considering the definition of $x(\ga)$ in \eqnref{eqn definition of x and y} we conclude that bridges in $\ga$ does not contribute to $x(\ga)$. Moreover, $R_j(\ga)=R_j(\beta)$ if $e_j$ is not a bridge. Hence,
\begin{equation}\label{eqn contraction xa}
\begin{split}
&x(\ga)=x(\oga_i) \quad \text{when $e_i$ is a bridge, and so  }    x(\ga)=x(\beta).\\
&x(\oga_i)=x(\overline{\beta}_i) \quad \text{when $e_i$ is not a bridge}.
\end{split}
\end{equation}
We use \eqnref{eqn contraction xa} and \remref{rem notationRi} in the second equality below:
\begin{equation}\label{eqn contraction xb}
\begin{split}
\sum_{e_i \in \ee{\ga}} \frac{\ri}{ \li + \ri } x(\oga_i) & = \sum_{e_i \in \ee{\ga}-B(\ga)} \frac{\ri}{ \li + \ri } x(\oga_i)+\sum_{e_i \in B(\ga)} \frac{\ri}{ \li + \ri } x(\oga_i) \\
& = \sum_{e_i \in \ee{\ga}-B(\ga)} \frac{\ri}{ \li + \ri } x(\beta)+\sum_{e_i \in B(\ga)} x(\beta), \\
&=(v-t-2) x(\beta)+t\cdot x(\beta), \qquad \text{using \cite[Theorem 4.12]{C5} for $\beta$.} \\
&=(v-2) x(\ga), \qquad \text{by using \eqnref{eqn contraction xa}}.
\end{split}
\end{equation}
This proves the first equality in the theorem. Next, we prove the second equality.
We first note that $r(\ga)=x(\ga)+y(\ga)$ for any metrized graph $\ga$.

On one hand, by \thmref{thm contraction for r} we have
\begin{equation}\label{eqn r x and y}
\begin{split}
(v-2)r(\ga)=\sum_{e_i \in \ee{\ga}} \frac{\ri}{ \li + \ri } r(\oga_i)=\sum_{e_i \in \ee{\ga}} \frac{\ri}{ \li + \ri } \big( x(\oga_i)+ y(\oga_i) \big).
\end{split}
\end{equation}
On the other hand, by the first equality that we just proved for $x(\ga)$
$$(v-2)r(\ga)=(v-2)x(\ga)+(v-2)y(\ga)=(v-2)y(\ga)+\sum_{e_i \in \ee{\ga}} \frac{\ri}{ \li + \ri }x(\oga_i).$$
Thus, the second equality in the theorem follows from this equality and \eqnref{eqn r x and y}.
\end{proof}
When the number of vertices is $2$ or $3$, we know the exact relation between $Kf(\ga)$ and $y(\ga)$.
\begin{corollary}\label{cor Kirchhoff v=2 or 3}
For any metrized graph $\ga$ with $v$ vertices. Then we have
$$ Kf(\ga)=y(\ga), \quad \text{if $v=2$}. \qquad Kf(\ga)=2y(\ga), \quad \text{if $v=3$}.$$
\end{corollary}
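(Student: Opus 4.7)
The plan is to substitute $v=2$ and $v=3$ directly into the contraction formula of \thmref{thm main1} and simplify the resulting right-hand side. The main subtlety, which needs to be dispatched at the outset, is how self-loops behave under contraction: contracting a self-loop does not reduce the vertex count. However, for any self-loop $e_i$ one has $\pp=\qq$, so $\ri=0$ and the weight $\frac{\ri}{\li+\ri}$ equals $0$, annihilating every self-loop contribution in every sum that appears below. This lets us safely ignore self-loops in both cases.

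For $v=2$, \thmref{thm main1} becomes
\[
-2\,Kf(\ga) \;=\; \sum_{e_i\in\ee{\ga}}\frac{\ri}{\li+\ri}\,Kf(\oga_i) \;-\; 2\,y(\ga).
\]
Self-loops contribute $0$ as just noted. Any other edge $e_i$ has distinct endpoints, so $\oga_i$ has a single vertex and $Kf(\oga_i)=0$ by the definition of the Kirchhoff index. Therefore the entire sum on the right vanishes, and we obtain $Kf(\ga)=y(\ga)$.

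For $v=3$, the same substitution in \thmref{thm main1} gives
\[
-Kf(\ga) \;=\; \sum_{e_i\in\ee{\ga}}\frac{\ri}{\li+\ri}\,Kf(\oga_i) \;-\; 3\,y(\ga).
\]
Self-loops again drop out. For every other edge $e_i$, the contracted graph $\oga_i$ has exactly $2$ vertices, so the case $v=2$ just established yields $Kf(\oga_i)=y(\oga_i)$. Hence the remaining sum equals $\sum_{e_i\in\ee{\ga}}\frac{\ri}{\li+\ri}\,y(\oga_i)$, which by \thmref{thm contraction} (applied with $v=3$) is exactly $(v-2)\,y(\ga)=y(\ga)$. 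Plugging this back in gives $-Kf(\ga)=y(\ga)-3y(\ga)=-2y(\ga)$, that is, $Kf(\ga)=2y(\ga)$. The only real obstacle was the bookkeeping for self-loops; once that is dealt with, the argument is essentially a one-step induction from $v=2$ to $v=3$ through \thmref{thm main1} and \thmref{thm contraction}.
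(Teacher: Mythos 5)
Your proposal is correct and follows essentially the same route as the paper: substitute $v=2$ and $v=3$ into \thmref{thm main1}, use $Kf(\oga_i)=0$ for the one-vertex contractions, then bootstrap the $v=3$ case from the $v=2$ case via \thmref{thm contraction}. Your explicit handling of self-loops is a small extra care the paper defers to its later remark that $\frac{\ri}{\li+\ri}=0$ for a self-loop, but it does not change the argument.
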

\begin{proof}
When $v=2$, $\oga_i$ has only one vertex. In this case, $Kf(\oga_i)=0$ for each edge $e_i$. Then \thmref{thm main1} gives
that $Kf(\ga)=y(\ga)$.

When $v=3$, $\oga_i$ has two vertices, so we have  $Kf(\oga_i)=y(\oga_i)$ by the first equality. Thus, \thmref{thm main1} gives
$$-Kf(\ga)=\sum_{e_i \in \ee{\ga}} \frac{\ri}{\li+\ri} Kf(\oga_i) - 3 \cdot y(\ga)=
\sum_{e_i \in \ee{\ga}} \frac{\ri}{\li+\ri} y(\oga_i) - 3 \cdot y(\ga)
= -2y(\ga), $$
where the last equality follows from \thmref{thm contraction}.
This completes the proof.
\end{proof}

For any integer $1 \leq k \leq v-2$, if an edge $e_{i_k}$ is not a self loop in $\oga_{i_1,i_2, \dots, i_{k-1}}$, then $\#(\vv{\oga_{i_1,i_2, \dots, i_k}})= \#(\vv{\oga_{i_1,i_2, \dots, i_{k-1}}})-1$. We call $\oga_{i_1,i_2, \dots, i_{k}}$ be an \textit{admissible contraction} of $\ga$, if it is obtained from $\ga$ by contracting edges with distinct end points at each step.
We have $\#(\vv{\oga_{i_1,i_2, \dots, i_{k}}})=v-k$ iff $\vv{\oga_{i_1,i_2, \dots, i_{k}}}$ is an admissible contraction of $\ga$. Note that we have $\frac{\ri}{\li+\ri}=0$ for a self loop, so contraction of self loops can be neglected in contraction identities. Therefore, we restrict ourselves to the admissible contractions only.

Now, we successively apply the contraction identity given in \thmref{thm contraction} as follows:
\begin{theorem}\label{thm succesive contraction of x and y}
Let $\ga$ be metrized graph with $v \geq 3$ vertices, and let $k$ be an integer with $1 \leq k \leq v-2$. For admissible contractions, we have
\begin{equation*}
\begin{split}
\frac{(v-2)!}{(v-k-2)!}x(\ga) &= \sum_{\substack{e_{i_1} \in
\\ \ee{\ga}}}\frac{R_{i_1}}{L_{i_1}+R_{i_1}} \sum_{\substack{e_{i_2} \in
\\ \ee{\oga_{i_1}}}}\frac{R_{i_2}}{L_{i_2}+R_{i_2}}
\; \dots
\sum_{ \substack{e_{i_k} \in
\\ \ee{\oga_{i_1, \dots, i_{k-1}}}} }
\frac{R_{i_k}}{L_{i_k}+R_{i_k}} x(\oga_{i_1,\dots, i_k}),\\
\frac{(v-2)!}{(v-k-2)!}y(\ga) &= \sum_{\substack{e_{i_1} \in
\\ \ee{\ga}}}\frac{R_{i_1}}{L_{i_1}+R_{i_1}} \sum_{\substack{e_{i_2} \in
\\ \ee{\oga_{i_1}}}}\frac{R_{i_2}}{L_{i_2}+R_{i_2}}
\; \dots
\sum_{ \substack{e_{i_k} \in
\\ \ee{\oga_{i_1, \dots, i_{k-1}}}} }
\frac{R_{i_k}}{L_{i_k}+R_{i_k}} y(\oga_{i_1,\dots, i_k}).
\end{split}
\end{equation*}
\end{theorem}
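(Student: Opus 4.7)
The plan is to proceed by induction on $k$, using \thmref{thm contraction} as the engine at each stage. The two displayed equalities (for $x(\ga)$ and for $y(\ga)$) can be treated simultaneously because \thmref{thm contraction} provides identically shaped one-step recursions for both invariants; the proof below is written for $x(\ga)$, and the $y(\ga)$ case follows by verbatim substitution.

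The base case $k=1$ is precisely \thmref{thm contraction}, since $\frac{(v-2)!}{(v-3)!}=v-2$. For the inductive step, assume the identity at level $k-1$:
\begin{equation*}
\frac{(v-2)!}{(v-k-1)!}\,x(\ga) = \sum_{e_{i_1} \in \ee{\ga}}\frac{R_{i_1}}{L_{i_1}+R_{i_1}} \cdots \sum_{e_{i_{k-1}} \in \ee{\oga_{i_1,\dots,i_{k-2}}}}\frac{R_{i_{k-1}}}{L_{i_{k-1}}+R_{i_{k-1}}}\,x(\oga_{i_1,\dots,i_{k-1}}).
\end{equation*}
Because we restrict to admissible contractions, $\oga_{i_1,\dots,i_{k-1}}$ has exactly $v-(k-1)=v-k+1$ vertices. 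Since $1 \leq k \leq v-2$ implies $v-k+1 \geq 3$, we may apply \thmref{thm contraction} to this intermediate graph, obtaining
\begin{equation*}
(v-k-1)\,x(\oga_{i_1,\dots,i_{k-1}}) = \sum_{e_{i_k} \in \ee{\oga_{i_1,\dots,i_{k-1}}}}\frac{R_{i_k}}{L_{i_k}+R_{i_k}}\,x(\oga_{i_1,\dots,i_k}).
\end{equation*}
Multiplying the inductive hypothesis by $v-k-1$ and substituting this expansion inside each innermost summand collapses the left-hand side into $\frac{(v-2)!}{(v-k-2)!}\,x(\ga)$, while the right-hand side acquires the additional innermost sum over $e_{i_k}$, producing exactly the claimed formula at level $k$.

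The only subtleties are bookkeeping issues rather than genuine obstacles. First, one must verify that the vertex count after each admissible contraction decreases by exactly one, so that the factorial on the left telescopes as claimed; this is the content of the very definition of admissible contraction. Second, self-loops that may appear in the intermediate graphs $\oga_{i_1,\dots,i_j}$ contribute nothing to the sums because $\frac{R_i}{L_i+R_i}=0$ whenever $e_i$ is a self-loop, so indexing only over admissible contractions at the next stage is harmless. Granting \thmref{thm contraction}, the iteration itself is entirely mechanical.
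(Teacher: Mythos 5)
Your proposal is correct and follows essentially the same route as the paper: the paper's proof likewise consists of applying \thmref{thm contraction} inductively, together with the observation that self-loops contribute nothing since $\frac{R_i}{L_i+R_i}=0$ for them. Your write-up merely makes explicit the vertex-count and factorial bookkeeping that the paper leaves implicit.
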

\begin{proof}
We have $\frac{R_{i_j}}{L_{i_j}+R_{i_j}}=0$ for an edge $e_{i_j}$ that is a self loop. Thus, contraction of self loops
does not contribute to sums in contraction identities.
Applying \thmref{thm contraction} inductively gives the result.
\end{proof}
Note that \thmref{thm succesive contraction of x and y} generalizes the similar results in \cite{C5} to any metrized graph.

Next, we take the advantage of the contraction formula to derive \thmref{thm main2} which is our second main result.
It describes how Kirchhoff index changes under successive edge contractions.
\begin{theorem}\label{thm main2}
Let $\ga$ be metrized graph with $v \geq 5$ vertices.
and let $k$ be an integer with $1 \leq k \leq v-4$. For admissible contractions, we have
\begin{equation*}
\begin{split}
Kf(\ga)&= \frac{(v-4-k)!}{(v-4)!}\sum_{\substack{e_{i_1} \in
\\ \ee{\ga}}}\frac{R_{i_1}}{L_{i_1}+R_{i_1}} \sum_{\substack{e_{i_2} \in
\\ \ee{\oga_{i_1}}}}\frac{R_{i_2}}{L_{i_2}+R_{i_2}}
\; \dots
\sum_{ \substack{e_{i_k} \in
\\ \ee{\oga_{i_1, \dots, i_{k-1}}}} }
\frac{R_{i_k}}{L_{i_k}+R_{i_k}} Kf(\oga_{i_1,\dots, i_k})\\
& \qquad
-\frac{\big(v^2-(k+2)v+k-1 \big)k}{(v-k-2)(v-k-3) }y(\ga).
\end{split}
\end{equation*}
In particular, if $k=v-4$, we have
\begin{equation*}
\begin{split}
Kf(\ga)&= \frac{1}{(v-4)!}\sum_{\substack{e_{i_1} \in
\\ \ee{\ga}}}\frac{R_{i_1}}{L_{i_1}+R_{i_1}} \sum_{\substack{e_{i_2} \in
\\ \ee{\oga_{i_1}}}}\frac{R_{i_2}}{L_{i_2}+R_{i_2}}
\; \dots
\sum_{ \substack{e_{i_{v-4}} \in
\\ \ee{\oga_{i_1, \dots, i_{v-5}}}} }
\frac{R_{i_{v-4}}}{L_{i_{v-4}}+R_{i_{v-4}}} Kf(\oga_{i_1,\dots, i_{v-4}})\\
& \qquad
-\frac{(3v-5)(v-4)}{ 2}y(\ga).
\end{split}
\end{equation*}
\end{theorem}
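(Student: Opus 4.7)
The plan is to prove the formula by induction on $k$, using \thmref{thm main1} at each step and invoking \thmref{thm succesive contraction of x and y} to collapse the accumulated $y$-terms into a single scalar multiple of $y(\ga)$.

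For the base case $k=1$ I would simply divide \thmref{thm main1} by $v-4$ (nonzero since $v\geq 5$), obtaining
$$Kf(\ga)=\frac{1}{v-4}\sum_{e_{i_1}\in\ee{\ga}}\frac{R_{i_1}}{L_{i_1}+R_{i_1}}Kf(\oga_{i_1})-\frac{v}{v-4}\,y(\ga),$$
and check that $-\frac{v}{v-4}=-\frac{v(v-3)}{(v-3)(v-4)}$ matches the $k=1$ instance of the claimed coefficient of $y(\ga)$.

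For the inductive step, I would assume the formula at some $k$ with $1\leq k\leq v-5$. Each $\oga_{i_1,\dots,i_k}$ appearing in the rightmost sum of the hypothesis then has $v-k\geq 5$ vertices, so \thmref{thm main1} applies and gives
$$Kf(\oga_{i_1,\dots,i_k})=\frac{1}{v-k-4}\sum_{e_{i_{k+1}}\in\ee{\oga_{i_1,\dots,i_k}}}\frac{R_{i_{k+1}}}{L_{i_{k+1}}+R_{i_{k+1}}}Kf(\oga_{i_1,\dots,i_{k+1}})-\frac{v-k}{v-k-4}\,y(\oga_{i_1,\dots,i_k}).$$
Substituting this into the inductive hypothesis, the first summand produces the desired $(k+1)$-fold nested sum with prefactor $\frac{(v-5-k)!}{(v-4)!}=\frac{(v-4-(k+1))!}{(v-4)!}$, while the second summand produces a $k$-fold nested sum of $y(\oga_{i_1,\dots,i_k})$ multiplied by a scalar. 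By \thmref{thm succesive contraction of x and y}, that $k$-fold sum equals $\frac{(v-2)!}{(v-k-2)!}\,y(\ga)$, so the total correction collapses into a single scalar multiple of $y(\ga)$.

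The main obstacle I expect will be the algebraic reconciliation showing that the accumulated coefficient of $y(\ga)$ matches the claimed closed form. After cancelling factorials, the new contribution to the coefficient at step $k+1$ simplifies to $-\frac{(v-2)(v-3)(v-k)}{(v-k-2)(v-k-3)(v-k-4)}$, and the required identity reduces to the polynomial equation
$$(v^2-(k+3)v+k)(k+1)(v-k-2)=(v^2-(k+2)v+k-1)k(v-k-4)+(v-2)(v-3)(v-k),$$
which follows by routine expansion in $v$ and $k$. Finally, the particular case $k=v-4$ is obtained by direct substitution: the prefactor becomes $1/(v-4)!$, and $(v^2-(k+2)v+k-1)k/((v-k-2)(v-k-3))$ at $k=v-4$ reduces to $(3v-5)(v-4)/2$, recovering the second displayed formula.
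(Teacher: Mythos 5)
Your proposal is correct and follows essentially the same route as the paper: the paper's proof is precisely a successive application of \thmref{thm main1} to each $Kf(\oga_{i_1,\dots,i_k})$ combined with the contraction identity for $y$ (\thmref{thm contraction} / \thmref{thm succesive contraction of x and y}) to collapse the correction terms into a multiple of $y(\ga)$, with the coefficient tracked by an explicit identity. Your recursive identity $(v^2-(k+3)v+k)(k+1)(v-k-2)=(v^2-(k+2)v+k-1)k(v-k-4)+(v-2)(v-3)(v-k)$ is just the induction-step form of the closed-form summation identity the paper states, and both check out.
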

\begin{proof}
The proof follows by successive application of \thmref{thm main1} for each $Kf(\oga_{i_1,\dots, i_k})$ and \thmref{thm contraction} for each $y(\oga_{i_1,\dots, i_k})$. One should be careful about determining the coefficient of $y(\ga)$ after each contraction step. Note that we can compute the coefficient of $y(\ga)$ at the $k$-th contraction step with the help of the following identity:
$$\frac{v}{v-4}+\sum_{i=1}^{k-1}\frac{v-i}{v-4-i}\prod_{j=1}^{i}\frac{v-1-j}{v-3-j}=\frac{\big(v^2-(k+2)v+k-1 \big)k}{(v-k-2)(v-k-3) }.$$
\end{proof}
Note that $\oga_{i_1,\dots, i_{v-4}}$ has $4$ vertices. Therefore,  it is important to know the relation between $Kf(\ga)$ and $y(\ga)$ when $\ga$ has $4$ edges to derive further conclusions from \thmref{thm main2}. Although the exact relation as in \corref{cor Kirchhoff v=2 or 3} is not possible in general, we can have upper and lower bounds of $Kf(\ga)$ in terms of $y(\ga)$. This is what we show below.
First, we recall some facts.

Suppose the set of vertices for an admissible contraction $\oga_{i_1,i_2, \dots, i_{v-2}}$ of $\ga$ is $\{p', q'\}$.
Let $m$ vertices of $\ga$ are contracted into $p'$ and the remaining $k$ vertices are contracted into $q'$. Then both $m$ and $k$
are positive integers with $m+k=v$, where $v$ is the number of vertices in $\ga$.

Next, we state a corollary to \thmref{thm succesive contraction of x and y}. It generalizes the relevant result from \cite{C5} to any metrized graph possibly with bridges.
\begin{corollary}\label{cor contr for y k=v-2}
Let $\ga$ be metrized graph with $v \geq 3$ vertices. For admissible contractions $\oga_{i_1, \dots, i_{v-2}}$, we have
$$(v-2)! y(\ga) = \sum_{\substack{e_{i_1} \in
\\ \ee{\ga}}}\frac{R_{i_1}}{L_{i_1}+R_{i_1}} \sum_{\substack{e_{i_2} \in
\\ \ee{\oga_{i_1}}}}\frac{R_{i_2}}{L_{i_2}+R_{i_2}}
\; \dots
\sum_{ \substack{e_{i_{v-2}} \in
\\ \ee{\oga_{i_1, \dots, i_{v-3}}}} }
\frac{R_{i_{v-2}}}{L_{i_{v-2}}+R_{i_{v-2}}} r_{\oga_{i_1,\dots, i_{v-2}}}(p',q').$$
\end{corollary}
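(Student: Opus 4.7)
The plan is to apply \thmref{thm succesive contraction of x and y} at its maximal index $k=v-2$ and then identify the innermost terms using \corref{cor Kirchhoff v=2 or 3}. Taking $k=v-2$ is permitted since the theorem's range is $1\leq k \leq v-2$, and the coefficient on the left of its $y$-identity then becomes $(v-2)!/0! = (v-2)!$, which matches the factor appearing on the left of the statement we need to prove.

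Carrying out the plan, I would first specialize the second equality of \thmref{thm succesive contraction of x and y} to $k=v-2$, which expresses $(v-2)!\,y(\ga)$ as the iterated sum on the right of the corollary but with $y(\oga_{i_1,\dots,i_{v-2}})$ in place of $r_{\oga_{i_1,\dots,i_{v-2}}}(p',q')$. In an admissible contraction the number of vertices decreases by exactly one at each step, so after $v-2$ contractions the graph $\oga_{i_1,\dots,i_{v-2}}$ has exactly two vertices, namely the $p'$ and $q'$ fixed in the paragraph preceding the corollary. For any two-vertex metrized graph $\beta$ with vertex set $\{p',q'\}$, \corref{cor Kirchhoff v=2 or 3} gives $y(\beta)=Kf(\beta)$, and the definition of $Kf$ collapses via symmetry to $Kf(\beta)=\tfrac{1}{2}(r_\beta(p',q')+r_\beta(q',p'))=r_\beta(p',q')$. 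Substituting $y(\oga_{i_1,\dots,i_{v-2}})=r_{\oga_{i_1,\dots,i_{v-2}}}(p',q')$ into the expansion yields the claimed formula.

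There is no substantive obstacle, since the real work is already packaged in \thmref{thm succesive contraction of x and y} and \corref{cor Kirchhoff v=2 or 3}. The only point deserving a brief remark is that one should not worry about self-loops that may appear in the intermediate graphs $\oga_{i_1,\dots,i_{k-1}}$: for any such edge $e_{i_k}$ we have $R_{i_k}/(L_{i_k}+R_{i_k})=0$ by \remref{rem notationRi}, so self-loops contribute nothing to the inner sums. Consequently the iterated summation is automatically concentrated on admissible contraction sequences, and every nonzero summand terminates after exactly $v-2$ steps at a two-vertex graph, which is precisely the regime where the reduction $y=Kf=r(p',q')$ applies.
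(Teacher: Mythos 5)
Your proof is correct and follows essentially the same route as the paper: specialize \thmref{thm succesive contraction of x and y} to $k=v-2$ and identify $y$ of the resulting two-vertex graph with the resistance $r_{\oga_{i_1,\dots,i_{v-2}}}(p',q')$. The only difference is minor: the paper justifies the identity $y(\oga_{i_1,\dots,i_{v-2}})=r_{\oga_{i_1,\dots,i_{v-2}}}(p',q')$ by citing the proof of \cite[Proposition 5.8]{C5}, whereas you derive it internally from \corref{cor Kirchhoff v=2 or 3} together with the observation that the Kirchhoff index of a two-vertex graph equals the resistance between its two vertices, which is a slightly more self-contained justification.
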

\begin{proof}
First we note that $y(\oga_{i_1, \dots, i_{v-2}})=r_{\oga_{i_1,\dots, i_{v-2}}}(p',q')$ by the proof of \cite[Proposition 5.8]{C5}.

Then the result follows from \thmref{thm succesive contraction of x and y} with $k=v-2$.
\end{proof}
We recall another contraction formula for the Kirchhoff index.
\begin{lemma}\cite[Lemma 5.2]{C6}\label{lem Kirchoff index successive contraction id1}
Let $\ga$ be a metrized graph with $v$ vertices, and let $m$ and $k$ be defined as above. For any admissible contraction $\oga_{i_1,i_2, \dots, i_{v-2}}$, we have
$$Kf(\ga) = \frac{1}{(v-2)!} \sum_{\substack{e_{i_1} \in \\ \ee{\ga}}}\frac{R_{i_1}}{L_{i_1}+R_{i_1}}
\dots \sum_{\substack{e_{i_{v-2}} \in \\ \ee{\oga_{i_1, \dots, i_{v-3}}}}}
\frac{R_{i_{v-2}}}{L_{i_{v-2}}+R_{i_{v-2}}}  m \cdot k \cdot r_{\oga_{i_1,\dots, i_{v-2}}}(p',q').$$
\end{lemma}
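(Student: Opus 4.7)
The plan is to iterate \thmref{thm resistance deletion and contraction ids3} on each individual term $r(p,q)$ in the definition $2Kf(\ga)=\sum_{p,q \in \vv{\ga}}r(p,q)$, then interchange the order of summation and count contributions from each admissible contraction sequence.

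First I would fix two vertices $p, q \in \vv{\ga}$ and apply \thmref{thm resistance deletion and contraction ids3} to $r(p,q)$, which yields $(v-2)r(p,q)=\sum_{e_{i_1}\in\ee{\ga}}\frac{R_{i_1}}{L_{i_1}+R_{i_1}}r_{\oga_{i_1}}(p,q)$. Applying the same theorem again to each $r_{\oga_{i_1}}(p,q)$, noting that $\oga_{i_1}$ has $v-1$ vertices (so the coefficient is now $v-3$), and iterating a total of $v-2$ times reduces the graph to two vertices $p', q'$. Self-loops that appear along the way contribute nothing because $\frac{R_{i_j}}{L_{i_j}+R_{i_j}}=0$ for a loop, so only admissible contractions survive. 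The product of the coefficients is $(v-2)(v-3)\cdots 2\cdot 1=(v-2)!$, giving
\begin{equation*}
(v-2)!\,r(p,q) = \sum_{e_{i_1} \in \ee{\ga}}\frac{R_{i_1}}{L_{i_1}+R_{i_1}} \cdots \sum_{e_{i_{v-2}} \in \ee{\oga_{i_1,\dots,i_{v-3}}}}\frac{R_{i_{v-2}}}{L_{i_{v-2}}+R_{i_{v-2}}}\, r_{\oga_{i_1,\dots,i_{v-2}}}(p,q).
\end{equation*}

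Next I would sum this identity over all ordered pairs $(p,q)\in\vv{\ga}\times\vv{\ga}$, producing $(v-2)!\cdot 2Kf(\ga)$ on the left. Swapping the sum over vertex pairs with the sums over edge contractions, for each fixed admissible sequence $(i_1,\dots,i_{v-2})$ I need to evaluate $\sum_{p,q\in\vv{\ga}}r_{\oga_{i_1,\dots,i_{v-2}}}(p,q)$. Since $\oga_{i_1,\dots,i_{v-2}}$ has only the two vertices $p'$ and $q'$, this inner resistance equals $r_{\oga_{i_1,\dots,i_{v-2}}}(p',q')$ precisely when $p$ and $q$ belong to different contraction classes and vanishes otherwise. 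Using the $m, k$ notation (with $m+k=v$) from the paragraph preceding the lemma, the number of ordered pairs $(p,q)$ split across the two classes is $mk+km=2mk$, so the inner sum is $2mk\cdot r_{\oga_{i_1,\dots,i_{v-2}}}(p',q')$.

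Substituting and canceling the factor of $2$ yields the desired formula. The main obstacle is bookkeeping rather than content: one must carefully verify that the product of coefficients telescopes to $(v-2)!$ (tracking which step produces which coefficient), that loops created mid-contraction genuinely drop out of the sum, and that the counting argument $2mk$ correctly accounts for both orderings of a pair across the two classes. Handling bridges poses no extra trouble, as \thmref{thm resistance deletion and contraction ids3} is stated for arbitrary metrized graphs.
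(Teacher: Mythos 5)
Your argument is correct. Note that the paper does not actually prove this lemma here --- it is imported verbatim as \cite[Lemma 5.2]{C6} --- so there is no in-paper proof to compare against; but your derivation (iterating \thmref{thm resistance deletion and contraction ids3} down to two vertices to accumulate the factor $(v-2)!$, summing over ordered pairs $(p,q)$, and observing that exactly $2mk$ ordered pairs land in distinct contraction classes) is sound and is precisely the technique the paper itself uses to prove \thmref{thm contraction for r}, so it serves as a legitimate self-contained proof. The only points worth making explicit are the ones you already flag: self-loop terms vanish since $R_{i_j}=0$ gives $\frac{R_{i_j}}{L_{i_j}+R_{i_j}}=0$, and the identity of \thmref{thm resistance deletion and contraction ids3} remains trivially valid ($0=0$) once $p$ and $q$ have been merged, so the iteration never breaks down.
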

The following upper bound was given in \cite[Equation 21]{C6} for regular graphs that are bridgeless.
Now, we have it without any restriction on $\ga$:
\begin{corollary}\label{cor Kirchhoff upper bound}
For any metrized graph $\ga$ with $v$ vertices, we have
$$Kf(\ga) \leq \frac{v^2}{4} y(\ga).$$
\end{corollary}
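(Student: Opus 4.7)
The plan is to combine the two representations of $Kf(\ga)$ and $y(\ga)$ as iterated sums over admissible contractions that reduce $\ga$ down to a two-vertex graph, and then apply an elementary AM--GM bound in each summand.

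Concretely, I would start from \lemref{lem Kirchoff index successive contraction id1}, which expresses
\begin{equation*}
Kf(\ga) = \frac{1}{(v-2)!} \sum_{\substack{e_{i_1} \in \\ \ee{\ga}}}\frac{R_{i_1}}{L_{i_1}+R_{i_1}}
\dots \sum_{\substack{e_{i_{v-2}} \in \\ \ee{\oga_{i_1, \dots, i_{v-3}}}}}
\frac{R_{i_{v-2}}}{L_{i_{v-2}}+R_{i_{v-2}}}  \, m \cdot k \cdot r_{\oga_{i_1,\dots, i_{v-2}}}(p',q'),
\end{equation*}
where for each admissible contraction $\oga_{i_1,\dots,i_{v-2}}$ the positive integers $m,k$ satisfy $m+k=v$. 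By AM--GM,
\begin{equation*}
m \cdot k \leq \frac{(m+k)^2}{4} = \frac{v^2}{4}.
\end{equation*}

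Next, I would observe that every factor appearing in the iterated summation is nonnegative: the ratios $R_{i_j}/(L_{i_j}+R_{i_j})$ lie in $[0,1]$, and the two-vertex resistance $r_{\oga_{i_1,\dots, i_{v-2}}}(p',q')$ is nonnegative. Thus we may replace $m \cdot k$ by $v^2/4$ termwise to obtain
\begin{equation*}
Kf(\ga) \leq \frac{v^2}{4(v-2)!} \sum_{\substack{e_{i_1} \in \\ \ee{\ga}}}\frac{R_{i_1}}{L_{i_1}+R_{i_1}}
\dots \sum_{\substack{e_{i_{v-2}} \in \\ \ee{\oga_{i_1, \dots, i_{v-3}}}}}
\frac{R_{i_{v-2}}}{L_{i_{v-2}}+R_{i_{v-2}}} \, r_{\oga_{i_1,\dots, i_{v-2}}}(p',q').
\end{equation*}
By \corref{cor contr for y k=v-2}, this remaining iterated sum equals $(v-2)!\, y(\ga)$, and hence $Kf(\ga) \leq \tfrac{v^2}{4} y(\ga)$.

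There is essentially no obstacle here since the two key ingredients (\lemref{lem Kirchoff index successive contraction id1} and \corref{cor contr for y k=v-2}) have already been established; the only mildly delicate point is to confirm that the integers $m,k$ attached to each admissible contraction are indeed positive (so that $m+k = v$ with both $m,k\geq 1$, giving $mk \leq v^2/4$), but this is guaranteed by the definition of admissible contraction and the fact that each contraction step merges two distinct vertices. The cases $v=2,3$ fall out directly from \corref{cor Kirchhoff v=2 or 3} since $y(\ga) \geq v^2 y(\ga)/4$ is equivalent to $Kf(\ga) \leq y(\ga)$ when $v=2$ and to $Kf(\ga) \leq 9 y(\ga)/4$ when $v=3$, both of which hold by that corollary.
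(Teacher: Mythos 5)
Your argument is correct and is essentially identical to the paper's proof: both bound $m\cdot k \leq v^2/4$ via AM--GM for positive integers with $m+k=v$, and then combine \lemref{lem Kirchoff index successive contraction id1} with \corref{cor contr for y k=v-2}. You have merely spelled out the termwise comparison and the small-$v$ cases in more detail.
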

\begin{proof}
When $m+k=v$ for any two positive integers $m$ and $k$, the maximum of $m \cdot k$ is at most $\frac{v^2}{4}$.
Then the proof follows from \lemref{lem Kirchoff index successive contraction id1} and \corref{cor contr for y k=v-2}.
\end{proof}
\begin{lemma}\label{lem Kirchhoff v=4}
Let $\ga$ be a metrized graph with $4$ vertices. Then we have
$$3y(\ga) \leq  Kf(\ga) \leq 4  y(\ga).$$
\end{lemma}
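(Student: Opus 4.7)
The plan is to combine the two ``successive contraction'' formulas already in hand, namely \lemref{lem Kirchoff index successive contraction id1} and \corref{cor contr for y k=v-2}, specialised to $v = 4$, and compare them term-by-term.

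Specialising \lemref{lem Kirchoff index successive contraction id1} to $v=4$ gives
\begin{equation*}
Kf(\ga) = \frac{1}{2!}\sum_{e_{i_1} \in \ee{\ga}} \frac{R_{i_1}}{L_{i_1}+R_{i_1}} \sum_{e_{i_2} \in \ee{\oga_{i_1}}} \frac{R_{i_2}}{L_{i_2}+R_{i_2}}\, m \cdot k \cdot r_{\oga_{i_1,i_2}}(p',q'),
\end{equation*}
while \corref{cor contr for y k=v-2} with $v=4$ gives
\begin{equation*}
2! \, y(\ga) = \sum_{e_{i_1} \in \ee{\ga}} \frac{R_{i_1}}{L_{i_1}+R_{i_1}} \sum_{e_{i_2} \in \ee{\oga_{i_1}}} \frac{R_{i_2}}{L_{i_2}+R_{i_2}}\, r_{\oga_{i_1,i_2}}(p',q').
\end{equation*}
These two expressions have the \emph{identical} weighted-sum structure; the only difference is the factor $m \cdot k$ that appears inside each summand of the Kirchhoff formula.

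Next I would use the fact that $p'$ and $q'$ are two distinct vertices of an admissible contraction to two vertices of $\ga$, so $m, k$ are positive integers with $m + k = v = 4$. The possible pairs are $(1,3), (2,2), (3,1)$, giving $m \cdot k \in \{3, 4\}$, i.e.\
\begin{equation*}
3 \;\leq\; m \cdot k \;\leq\; 4
\end{equation*}
for \emph{every} summand, independent of which admissible sequence of contractions one takes. Bounding the factor $m \cdot k$ term-by-term in the Kirchhoff formula and then recognising the resulting weighted sum as $2 \, y(\ga)$ via the second display gives
\begin{equation*}
\frac{3}{2} \cdot 2 \, y(\ga) \;\leq\; Kf(\ga) \;\leq\; \frac{4}{2} \cdot 2 \, y(\ga),
\end{equation*}
which is exactly $3 \, y(\ga) \leq Kf(\ga) \leq 4 \, y(\ga)$. (The upper bound is also immediate from \corref{cor Kirchhoff upper bound} with $v=4$, since $v^2/4 = 4$; I would mention this as a cross-check.)

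There is no real obstacle: once the two contraction identities of \lemref{lem Kirchoff index successive contraction id1} and \corref{cor contr for y k=v-2} are in place, the proof reduces to the elementary observation that $m + k = 4$ with $m,k \geq 1$ forces $m \cdot k \in \{3,4\}$. The only thing worth double-checking is that the weights $\frac{R_{i_j}}{L_{i_j}+R_{i_j}}$ are nonnegative (true by construction) and that $r_{\oga_{i_1,i_2}}(p',q') \geq 0$, so that the term-by-term bounding is legitimate; both are standard.
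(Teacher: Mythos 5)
Your proposal is correct and follows essentially the same route as the paper: the paper's proof also combines \lemref{lem Kirchoff index successive contraction id1} with \corref{cor contr for y k=v-2} and the observation that $m+k=4$ forces $m\cdot k\in\{3,4\}$. You have merely spelled out the term-by-term comparison (and the nonnegativity of the weights) more explicitly than the paper does.
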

\begin{proof}
We apply the contraction formula given in \lemref{lem Kirchoff index successive contraction id1} to $\ga$.
Since $m+k=4$ and both $m$ and $k$ are positive integers, we either have $m \cdot k =3$ or $m \cdot k =4$.
Thus, the inequalities in the lemma follows from \corref{cor contr for y k=v-2}.
\end{proof}
Now, using \lemref{lem Kirchhoff v=4} for $\oga_{i_1,\dots, i_{v-4}}$, \thmref{thm main2} and \thmref{thm succesive contraction of x and y} with $k=v-4$, we derive the following proposition:
\begin{proposition}\label{prop Kirchhooff uplow}
For any metrized graph with $v \geq 4$, we have
$$ (v-1)y(\ga) \leq  Kf(\ga) \leq \frac{v^2-3v+4}{2}  y(\ga).$$
\end{proposition}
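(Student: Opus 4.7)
The plan is to invoke \thmref{thm main2} at its maximal depth $k = v-4$, so that every admissible contraction $\oga_{i_1,\dots,i_{v-4}}$ appearing in the formula has exactly $4$ vertices. The boundary case $v = 4$ is not covered by that theorem, but in that case the proposition is exactly \lemref{lem Kirchhoff v=4}, since $v - 1 = 3$ and $\frac{v^2-3v+4}{2} = 4$. So I henceforth assume $v \geq 5$.

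For the inductive range, \lemref{lem Kirchhoff v=4} applied to each four-vertex graph $\oga_{i_1,\dots,i_{v-4}}$ gives
$$3\, y(\oga_{i_1,\dots,i_{v-4}}) \;\leq\; Kf(\oga_{i_1,\dots,i_{v-4}}) \;\leq\; 4\, y(\oga_{i_1,\dots,i_{v-4}}).$$
I would substitute these bounds separately into the weighted $(v-4)$-fold sum of \thmref{thm main2}, pulling the constants $3$ and $4$ outside. \thmref{thm succesive contraction of x and y} with $k = v-4$ then evaluates the remaining weighted sum:
$$\sum_{e_{i_1}\in\ee{\ga}}\frac{R_{i_1}}{L_{i_1}+R_{i_1}} \cdots \sum_{e_{i_{v-4}}\in\ee{\oga_{i_1,\dots,i_{v-5}}}}\frac{R_{i_{v-4}}}{L_{i_{v-4}}+R_{i_{v-4}}}\, y(\oga_{i_1,\dots,i_{v-4}}) \;=\; \frac{(v-2)!}{2}\, y(\ga).$$
Dividing by $(v-4)!$ as in \thmref{thm main2} converts this sum into $\frac{(v-2)(v-3)}{2}\, y(\ga)$, so the resulting sandwich reads
$$\frac{3(v-2)(v-3)}{2}\, y(\ga) - \frac{(3v-5)(v-4)}{2}\, y(\ga) \;\leq\; Kf(\ga) \;\leq\; \frac{4(v-2)(v-3)}{2}\, y(\ga) - \frac{(3v-5)(v-4)}{2}\, y(\ga).$$

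What remains is purely algebraic simplification of the two coefficients. The lower coefficient is $\frac{3(v-2)(v-3)-(3v-5)(v-4)}{2}$, which collapses to $\frac{2v-2}{2} = v-1$, while the upper coefficient $\frac{4(v-2)(v-3)-(3v-5)(v-4)}{2}$ collapses to $\frac{v^2-3v+4}{2}$. The main obstacle here is really just bookkeeping: making sure the factorial factors from \thmref{thm main2} and \thmref{thm succesive contraction of x and y} match up correctly (they do, producing exactly the factor $(v-2)(v-3)$), and that the $y(\ga)$ coming out of the $y$-contraction is combined with the $y(\ga)$ term subtracted in \thmref{thm main2} with the right signs. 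Once these are checked, the two bounds follow immediately.
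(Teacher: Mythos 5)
Your argument is correct and is exactly the paper's proof: the paper derives this proposition in one sentence by combining \lemref{lem Kirchhoff v=4} applied to the four-vertex contractions $\oga_{i_1,\dots,i_{v-4}}$, \thmref{thm main2} with $k=v-4$, and \thmref{thm succesive contraction of x and y} with $k=v-4$, which is precisely your route. Your explicit algebra (the coefficients $\tfrac{3(v-2)(v-3)-(3v-5)(v-4)}{2}=v-1$ and $\tfrac{4(v-2)(v-3)-(3v-5)(v-4)}{2}=\tfrac{v^2-3v+4}{2}$) and your separate treatment of the boundary case $v=4$ are both correct and simply make explicit what the paper leaves to the reader.
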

We note that when $v \geq 5$ \corref{cor Kirchhoff upper bound} gives better upper bounds then \propref{prop Kirchhooff uplow}.

Next, we give an example to illustrate how the contraction formula in \thmref{thm main2} can be used.
\begin{figure}
\centering
\includegraphics[scale=0.5]{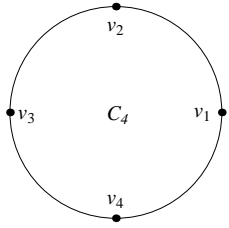} \caption{Circle graph with $4$ vertices} \label{fig circle}
\end{figure}

\textbf{Example I:} Let $C_v$ be the circle graph with $v$ vertices and $v$ edges. \figref{fig circle} illustrates $C_4$.
Suppose each edge length of the metrized graph $\ga=C_v$ is equal to $1$. Then $\ell(C_v)=v$, and we have $Kf(C_4)=5$ by direct computation. Moreover,
$\tg=\frac{1}{12}\ell(C_v)$ by \cite[Corollary 2.17]{C4}, $\tg=\frac{1}{12}\ell(C_v)-\frac{x(\ga)-y(\ga)}{6}$ by \cite[Equation 20]{C5},
$x(\ga)+y(\ga)=\frac{v-1}{v}\ell(C_v)$ by \cite[Lemma 6.3]{C6}. Thus, $x(\ga)=y(\ga)=\frac{v-1}{2}$.

Since $\oga_{i_1,\dots, i_{v-4}}=C_4$ for every admissible contraction of $\ga$ in this case, applying \thmref{thm main2} with $k=v-4$ gives
$Kf(C_v)=\frac{v(v^2-1)}{12}$. This agrees with the result obtained in \cite[Equation (5)]{LNT}.

\section{Trees, When Kirchhoff Index is Wiener Index}\label{sec tree}

In this section, we restrict ourselves to tree metrized graphs. A tree graph is a connected graph with no cycle. That is, each edge in a tree graph is a bridge. We rewrite many of the results from \secref{sec contraction} for the tree metrized graphs. In this way, we obtain new formulas for the Wiener index of tree graphs, and give new proofs to some previously know formulas for Wiener index.

Let $d(p,q)$ denote the distance between the vertices $p$ and $q$ in $\vv{\ga}$. Then the Wiener index of $\ga$
is defined as follows (see \cite[page 211]{DEG} and the references therein):
$$W(\ga):=\frac{1}{2} \sum_{p, \, q \in \vv{\ga}}d(p,q).$$
When $\ga$ is a tree, $d(p,q)=r(p,q)$ for each vertices $p$ and $q$, where $r(x,y)$ is the resistance function on $\ga$.
Therefore,
\begin{equation}\label{eqn wiener and Kirchhoff}
\begin{split}
W(\ga)=Kf(\ga) \quad \text{if  $\ga$ is a tree}.
\end{split}
\end{equation}

When $\ga$ is a tree, \lemref{lem term2} can be restated as follows
\begin{lemma}\label{lem term2fortrees}
Let $\ga$ be a metrized graph that is a tree with $v$ vertices. For any $p
\in \vv{\ga}$, we have
$$\elg = \sum_{q \in \vv{\ga}}(2-\va(q))r(p,q).$$
In particular, if each edge length is equal to $1$, we have
$$v-1 = \sum_{q \in \vv{\ga}}(2-\va(q))r(p,q).$$
\end{lemma}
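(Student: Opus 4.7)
The plan is to obtain this identity as a bridge-limit of the first equality in \lemref{lem term2}. The key observation is that in a tree every edge is a bridge, so \remref{rem notationRi} applies to each term: we must treat every $\ri$ as $\longrightarrow \infty$ while $\li$ stays fixed. All I then need to do is evaluate each piece of the first equality in \lemref{lem term2} under this limit and compare.

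On the right-hand side of that equality, the factor $\frac{\li}{\li+\ri}$ tends to $0$ for every edge while $r(\pp,p)+r(\qq,p)$ remains finite, so the sum over edges vanishes; the only surviving term is $-\sum_{q\in\vv\ga}(\va(q)-2)r(p,q)$. On the left-hand side, the convention in \remref{rem notationRi} forces $\{R_{a_{i},p},R_{b_{i},p}\}=\{0,\ri\}$ for every edge (depending on which component of $\ga-e_i$ contains $p$), so $(R_{a_{i},p}-R_{b_{i},p})^2=\ri^2$ and hence
$$\frac{\li(R_{a_{i},p}-R_{b_{i},p})^2}{(\li+\ri)^2} \;=\; \frac{\li\ri^2}{(\li+\ri)^2}\;\longrightarrow\; \li.$$
Summing over edges gives $\sum_{e_i\in\ee\ga}\li=\elg$. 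Equating the two sides produces exactly $\elg=\sum_{q\in\vv\ga}(2-\va(q))r(p,q)$, the first claim.

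For the specialization, I would invoke the elementary fact that a tree on $v$ vertices has $v-1$ edges; when each $\li=1$ this gives $\elg=v-1$, and substituting into the general formula yields the stated identity.

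The only potential subtlety is rigorously justifying the bridge limits, but this is precisely the same passage already used in Case II of \lemref{lem res sum} and in the bridge extension of \lemref{lem term2}, so no genuine obstacle arises.
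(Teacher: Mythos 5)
Your proposal is correct and follows essentially the same route as the paper's own proof: both take the bridge limit $\ri\to\infty$ in the first equality of \lemref{lem term2}, note that $(R_{a_i,p}-R_{b_i,p})^2=\ri^2$ so the left side tends to $\elg$ while the edge sum on the right vanishes, and then specialize via $\elg=v-1$ for unit edge lengths. No issues.
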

\begin{proof}
Each edge is a bridge in $\ga$ as it is a tree.
Thus, we have $(R_{a_{i},p}-R_{b_{i},p})^2=\ri^2$ for each edge in $\ga$, and so $\frac{\li(R_{a_{i},p}-R_{b_{i},p})^2}{(\li+\ri)^2}=\frac{\li \ri^2}{(\li+\ri)^2}$.
We have $\frac{\li \ri^2}{(\li+\ri)^2} \longrightarrow \li$ and $\frac{\li}{\li+\ri}\longrightarrow 0$ as $\ri \longrightarrow \infty$.
Therefore, the first equality in the lemma follows from the first equality given in \lemref{lem term2}.
When $\li=1$ for every $e_i \in \ee{\ga}$, the second equality in the lemma is obtained by using the fact that $\elg = e =v-1$, where $e$ is the number of edges of $\ga$.
\end{proof}

\begin{theorem}\label{thm wiener tree1}
Let $\ga$ be a metrized graph that is a tree with $v$ vertices. Then we have
$$W(\ga)=\frac{1}{4} \Big[ v \cdot \elg +  \sum_{p, \, \, q \in \vv{\ga}} \va(q)r(p,q) \Big] .$$
In particular, if each edge length is equal to $1$, we have
$$W(\ga)=\frac{1}{4} \Big[ v(v-1)+  \sum_{p, \, \, q \in \vv{\ga}} \va(q)r(p,q) \Big] .$$
\end{theorem}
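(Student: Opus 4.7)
The plan is to derive this identity directly from \lemref{lem term2fortrees} by summing over all vertices, then recognizing the resulting double sum of resistances as twice the Wiener index. Since $\ga$ is a tree, \eqnref{eqn wiener and Kirchhoff} lets us freely identify $W(\ga)$ with $Kf(\ga)=\tfrac{1}{2}\sum_{p,q\in\vv{\ga}}r(p,q)$.

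First I would take the first equality of \lemref{lem term2fortrees}, namely $\elg=\sum_{q\in\vv{\ga}}(2-\va(q))r(p,q)$, and sum it over all $p\in\vv{\ga}$. The left-hand side becomes $v\cdot\elg$, while the right-hand side splits as
\begin{equation*}
\sum_{p\in\vv{\ga}}\sum_{q\in\vv{\ga}}(2-\va(q))r(p,q)=2\sum_{p,q\in\vv{\ga}}r(p,q)-\sum_{p,q\in\vv{\ga}}\va(q)r(p,q).
\end{equation*}
The first sum on the right is $4\,Kf(\ga)=4\,W(\ga)$ by the definition of the Kirchhoff index together with \eqnref{eqn wiener and Kirchhoff}. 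Solving the resulting equation for $W(\ga)$ immediately gives the first displayed formula.

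For the special case when every edge has length $1$, the standard fact that a tree on $v$ vertices has exactly $v-1$ edges gives $\elg=v-1$, and substituting into the first formula yields the second displayed identity.

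There is no real obstacle here: the argument is a two-line manipulation once \lemref{lem term2fortrees} is in hand. The only small care needed is to be sure that the double sum $\sum_{p,q\in\vv{\ga}}r(p,q)$ is taken over all ordered pairs (including $p=q$, which contributes $0$), so that it equals $2\,Kf(\ga)$ rather than $Kf(\ga)$; this factor of two is what produces the $\tfrac{1}{4}$ in front of the bracket.
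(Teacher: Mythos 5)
Your proposal is correct and is essentially identical to the paper's own proof: the paper likewise sums the identity of \lemref{lem term2fortrees} over all $p\in\vv{\ga}$ and invokes the definition of $W(\ga)$. Your explicit bookkeeping of the factor of two from the ordered double sum $\sum_{p,q}r(p,q)=2W(\ga)$ is exactly the detail the paper leaves implicit.
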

\begin{proof}
We take the summation of the equalities given in \lemref{lem term2fortrees} over all vertices $p \in \vv{\ga}$. Then we obtain the result
by using the definition of $W(\ga)$.
\end{proof}
Note that the result given in \thmref{thm wiener tree1} was known in the literature for trees with equal edge lengths
(see \cite[page 217]{DEG} and the references therein).

Now, we can state our first main result for trees:
\begin{theorem}\label{thm wiener tree2}
Let $\ga$ be a metrized graph that is a tree with $v$ vertices. Then we have
$$W(\ga)=\frac{2v-1}{4}\elg+ \frac{1}{8} \sum_{p, \, \, q \in \vv{\ga}} \va(p) \va(q) r(p,q).$$
In particular, if each edge length is equal to $1$, we have
$$W(\ga)=\frac{(2v-1)(v-1)}{4}+ \frac{1}{8} \sum_{p, \, \, q \in \vv{\ga}} \va(p) \va(q) r(p,q).$$
\end{theorem}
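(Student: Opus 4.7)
The plan is to upgrade \thmref{thm wiener tree1} by replacing the single valence $\va(q)$ in the sum with the symmetric weight $\va(p)\va(q)$, using \lemref{lem term2fortrees} weighted by $\va(p)$.

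Starting from \thmref{thm wiener tree1}, we have
$$4W(\ga) = v\cdot \elg + \sum_{p,q \in \vv{\ga}} \va(q) r(p,q). \qquad (\star)$$
The first step is to take the identity $\elg = \sum_{q \in \vv{\ga}}(2-\va(q))r(p,q)$ from \lemref{lem term2fortrees}, multiply it by $\va(p)$, and sum over $p \in \vv{\ga}$. Since $\ga$ is a tree with $v$ vertices (hence $v-1$ edges), the sum of valences is $\sum_{p \in \vv{\ga}} \va(p) = 2(v-1)$, so the left-hand side collapses to $2(v-1)\elg$. On the right-hand side we obtain
$$2\sum_{p,q \in \vv{\ga}} \va(p) r(p,q) - \sum_{p,q \in \vv{\ga}} \va(p)\va(q) r(p,q).$$

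Next, I would use the symmetry $r(p,q) = r(q,p)$ to rewrite $\sum_{p,q}\va(p)r(p,q) = \sum_{p,q}\va(q)r(p,q)$, so that the weighted lemma becomes
$$2(v-1)\elg = 2\sum_{p,q \in \vv{\ga}} \va(q) r(p,q) - \sum_{p,q \in \vv{\ga}} \va(p)\va(q) r(p,q).$$
From $(\star)$, substitute $\sum_{p,q}\va(q)r(p,q) = 4W(\ga) - v\elg$:
$$2(v-1)\elg = 2\bigl(4W(\ga) - v\elg\bigr) - \sum_{p,q \in \vv{\ga}} \va(p)\va(q) r(p,q).$$
Rearranging yields $8W(\ga) = (4v-2)\elg + \sum_{p,q}\va(p)\va(q) r(p,q)$, which is the claimed first identity.

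The particular case follows immediately: when every edge has length $1$, we have $\elg = v-1$ since a tree on $v$ vertices has exactly $v-1$ edges, so $\frac{2v-1}{4}\elg = \frac{(2v-1)(v-1)}{4}$. There is no real obstacle here; the proof is a one-line algebraic consequence of \thmref{thm wiener tree1} combined with the valence-weighted version of \lemref{lem term2fortrees}, and the only point that requires any care is remembering to exploit the symmetry of the resistance function before substituting.
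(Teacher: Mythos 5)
Your proof is correct and is essentially the paper's own argument in a trivially different normalization: the paper weights Lemma~\ref{lem term2fortrees} by $2-\va(p)$ and sums over $p$ (using $\sum_p(2-\va(p))=2$), while you weight by $\va(p)$ (using $\sum_p\va(p)=2(v-1)$); both then invoke the symmetry of $r$ and Theorem~\ref{thm wiener tree1} to eliminate the single-valence sum and arrive at $8W(\ga)=(4v-2)\elg+\sum_{p,q}\va(p)\va(q)r(p,q)$.
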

\begin{proof}
We first multiply both sides of the first equality given in \lemref{lem term2fortrees} by $2-\va(p)$. Then
we take the summation of both sides over all vertices $p \in \vv{\ga}$. This gives
$$\elg \sum_{p \in \vv{\ga}} (2- \va(p) ) = \sum_{p, \, \, q \in \vv{\ga}} (2-\va(p)) (2-\va(q)) r(p,q).$$
Since $\sum_{p \in \vv{\ga}} (2- \va(p) )=2v-2e=2$, we have
\begin{equation*}\label{eqn wiener2}
\begin{split}
2 \elg &= \sum_{p, \, \, q \in \vv{\ga}} (2-\va(p)) (2-\va(q)) r(p,q). \quad \text{Using the definition of $W(\ga)$ gives} \\
&= 8W(\ga)+\sum_{p, \, \, q \in \vv{\ga}} \va(p) \va(q) r(p,q)-4\sum_{p, \, \, q \in \vv{\ga}} \va(q) r(p,q), \\
&=-8W(\ga)+4v\cdot \elg+\sum_{p, \, \, q \in \vv{\ga}} \va(p) \va(q) r(p,q), \quad \text{by  \thmref{thm wiener tree1}.}
\end{split}
\end{equation*}
This gives the first equality. The second equality follows from the first one by using the fact that $\elg=v-1$ when each edge length is equal to $1$.
\end{proof}
A discussion similar to the proof of \lemref{lem term2fortrees} gives
\begin{equation}\label{eqn x and y for tree}
\begin{split}
x(\ga)=0  \, \text{   and   }  \, y(\ga)=\elg  \, \text{if $\ga$ is a tree.}
\end{split}
\end{equation}

Next, we restate \thmref{thm main2} for a tree:
\begin{theorem}\label{thm main2 for tree}
Let metrized graph $\ga$ be a tree with $v \geq 5$ vertices.
and let $k$ be an integer with $1 \leq k \leq v-4$. For admissible contractions, we have
\begin{equation*}
\begin{split}
W(\ga)&= \frac{(v-4-k)!}{(v-4)!}\sum_{\substack{e_{i_1} \in
\\ \ee{\ga}}} \sum_{\substack{e_{i_2} \in
\\ \ee{\oga_{i_1}}}}
\; \dots
\sum_{ \substack{e_{i_k} \in
\\ \ee{\oga_{i_1, \dots, i_{k-1}}}} }
 W(\oga_{i_1,\dots, i_k})
-\frac{\big(v^2-(k+2)v+k-1 \big)k}{(v-k-2)(v-k-3) } \elg.
\end{split}
\end{equation*}
In particular, if $k=v-4$, we have
\begin{equation*}
\begin{split}
W(\ga)&= \frac{1}{(v-4)!}\sum_{\substack{e_{i_1} \in
\\ \ee{\ga}}} \sum_{\substack{e_{i_2} \in
\\ \ee{\oga_{i_1}}}}
\; \dots
\sum_{ \substack{e_{i_{v-4}} \in
\\ \ee{\oga_{i_1, \dots, i_{v-5}}}} }
 W(\oga_{i_1,\dots, i_{v-4}})
-\frac{(3v-5)(v-4)}{ 2} \elg.
\end{split}
\end{equation*}
\end{theorem}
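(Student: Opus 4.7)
The plan is to obtain this statement as a direct specialization of \thmref{thm main2} to the tree setting, using three simplifications that hold throughout every stage of the iterated contraction.

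First I would observe that if $\ga$ is a tree, then every admissible contraction $\oga_{i_1,\dots,i_k}$ is again a tree: contracting an edge of a tree removes one edge and identifies its two endpoints, which decreases both $v$ and $e$ by one and preserves connectivity and acyclicity. In particular, no edge ever becomes a self-loop during admissible contractions of a tree (so the admissibility condition is automatic), and at every stage every remaining edge is a bridge in its current ambient graph.

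Next I would apply the three key identifications at each contraction step:
\begin{enumerate}
\item By \remref{rem notationRi}, for any bridge $e_{i_j}$ we have $R_{i_j}\longrightarrow\infty$, so that
\[
\frac{R_{i_j}}{L_{i_j}+R_{i_j}}\longrightarrow 1.
\]
Since every edge of a tree and of every intermediate contraction is a bridge, every such factor appearing in \thmref{thm main2} collapses to $1$.
\item By \eqnref{eqn wiener and Kirchhoff}, $Kf(\oga_{i_1,\dots,i_k})=W(\oga_{i_1,\dots,i_k})$ because $\oga_{i_1,\dots,i_k}$ is a tree, and similarly $Kf(\ga)=W(\ga)$.
\item By \eqnref{eqn x and y for tree}, $y(\ga)=\elg$.
\end{enumerate}

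With these substitutions, the general formula in \thmref{thm main2} becomes exactly the first displayed equality of \thmref{thm main2 for tree}. Specializing to $k=v-4$ and simplifying the coefficient
\[
\frac{\bigl(v^{2}-(k+2)v+k-1\bigr)k}{(v-k-2)(v-k-3)}\bigg|_{k=v-4}=\frac{(3v-5)(v-4)}{2}
\]
yields the second displayed equality, which completes the argument.

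I do not expect any real obstacle; the only thing worth double-checking is the elementary simplification of the coefficient of $\elg$ at $k=v-4$, and the observation (used tacitly) that the limit statement about $R_{i_j}/(L_{i_j}+R_{i_j})$ may be taken under the finite sum since all quantities appearing are continuous in the $R_{i_j}$ away from the denominators that vanish, as already formalized throughout \secref{sec contraction} (e.g.\ in the proofs of \thmref{thm contraction for r} and \thmref{thm contraction}).
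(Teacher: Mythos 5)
Your proposal is correct and follows essentially the same route as the paper: specialize \thmref{thm main2} using $\frac{\ri}{\li+\ri}\to 1$ for bridges, $Kf(\ga)=W(\ga)$ from \eqnref{eqn wiener and Kirchhoff}, and $y(\ga)=\elg$ from \eqnref{eqn x and y for tree}. Your additional remarks (that contractions of trees remain trees, and the explicit check of the coefficient at $k=v-4$) are accurate elaborations of steps the paper leaves implicit.
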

\begin{proof}
Since each edge is a bridge, we have $\frac{\ri}{\li+\ri} \longrightarrow 1$ for each edge $e_i$ in $\ga$.
Then the result follows from \thmref{thm main2}, \eqnref{eqn x and y for tree} and \eqnref{eqn wiener and Kirchhoff}.
\end{proof}
\begin{figure}
\centering
\includegraphics[scale=0.8]{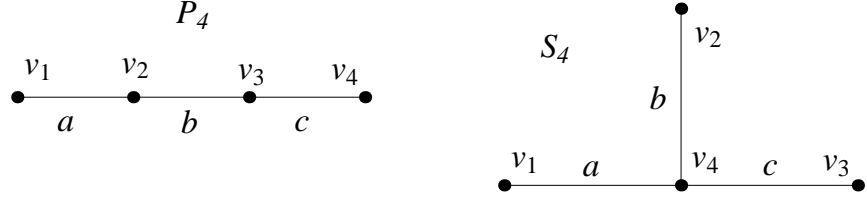} \caption{Path and star graphs with $4$ vertices} \label{fig pathstar}
\end{figure}
To derive further results about $W(\ga)$ by using \thmref{thm main2 for tree}, we need to understand the Wiener index of $\oga_{i_1,\dots, i_{v-4}}$ which is a tree with $4$ vertices. Thus, we consider \lemref{lem wiener for v=4} below.

Let $S_n$ and $P_n$ be star and path metrized graphs on $n$ vertices, respectively. \figref{fig pathstar} illustrates $S_4$ and $P_4$.
\begin{lemma}\label{lem wiener for v=4}
Suppose metrized graph $\ga$ is a tree with $4$ vertices. Then $\ga$ is either $S_4$ or $P_4$.
Moreover, $W(P_4)=3(a+b+c)+b$ and $W(S_4)=3(a+b+c)$, where edge lengths are as in \figref{fig pathstar}.
\end{lemma}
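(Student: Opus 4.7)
The plan is to proceed in two very short steps: first classify trees on $4$ vertices up to isomorphism, then tabulate the pairwise distances in each case and sum them.

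For the classification, I would invoke the handshake identity together with the fact that a tree on $v$ vertices has $v-1$ edges. This gives
$$\sum_{p \in \vv{\ga}} \va(p) \;=\; 2|\ee{\ga}| \;=\; 2(v-1) \;=\; 6.$$
Since $\ga$ is a connected graph on $4$ vertices, each valence lies in $\{1,2,3\}$, so the only multisets of four such integers summing to $6$ are $(3,1,1,1)$ and $(2,2,1,1)$. The sequence $(3,1,1,1)$ forces a central vertex of valence $3$ adjacent to three leaves, i.e.\ $\ga \cong S_4$, while $(2,2,1,1)$ forces two adjacent valence-$2$ vertices each carrying a leaf, i.e.\ $\ga \cong P_4$. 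No other combinatorial tree exists on $4$ vertices.

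For the Wiener index values, I would enumerate the six unordered pairwise distances directly from \figref{fig pathstar}. For $S_4$ with edge lengths $a$, $b$, $c$ from the center to the three leaves, the three center-to-leaf distances are $a$, $b$, $c$ and the three leaf-to-leaf distances (routed through the center) are $a+b$, $a+c$, $b+c$, summing to $3(a+b+c)$. For $P_4 = p_1 - p_2 - p_3 - p_4$ with successive edge lengths $a$, $b$, $c$, the six pairwise distances are $a$, $a+b$, $a+b+c$, $b$, $b+c$, $c$, which sum to $3a+4b+3c = 3(a+b+c)+b$. The factor $\tfrac{1}{2}$ in the definition of $W(\ga)$ exactly absorbs the double counting of ordered pairs in the sum $\sum_{p,q}d(p,q)$, so these totals equal $W(S_4)$ and $W(P_4)$ respectively.

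There is no genuine obstacle in this lemma: the argument is entirely combinatorial. The only bookkeeping issues are confirming that the edge-length labels $a,b,c$ are matched with \figref{fig pathstar} consistently and that the $\tfrac{1}{2}$ in the definition of $W$ is accounted for when passing from unordered to ordered pairs.
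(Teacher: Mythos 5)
Your proposal is correct and matches the paper's approach: the paper simply states that ``a direct computation gives the result,'' and your classification via the degree-sum identity followed by explicit enumeration of the six pairwise distances is exactly that computation carried out in full. No issues.
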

\begin{proof}
A direct computation gives the result.
\end{proof}

Now, we can state our second main result for trees:
\begin{theorem}\label{thm main3 for tree}
Let metrized graph $\ga$ be a tree with $v$ vertices. Suppose each edge length of $\ga$ is $1$. Then we have
\begin{equation*}
\begin{split}
W(\ga) &= (v-1)^2+\sum_{\substack{\{ e_{i_1}, \, e_{i_2}, \, \cdots, \,  e_{i_{v-4}} \} \subset \ee{\ga} \\
\oga_{i_1,\dots, i_{v-4}}=P_4}} 1,\\
&= (v-1)^2+\sum_{\substack{\{ e_{i_1}, \, e_{i_2}, \, e_{i_3} \} \subset \ee{\ga} \\
e_{i_1}, e_{i_2}, e_{i_3} \in P \\ \text{P is a path in $\ga$}}} 1.
\end{split}
\end{equation*}
The last summation is taken over all subsets $\{ e_{i_1}, \, e_{i_2}, \, e_{i_3} \}$ of $\ee{\ga}$ such that the edges $e_{i_1}$, $e_{i_2}$ and $e_{i_3}$ are parts of a path in $\ga$.
\end{theorem}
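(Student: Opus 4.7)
The plan is to apply \thmref{thm main2 for tree} with $k = v-4$ and combine it with the $4$-vertex classification in \lemref{lem wiener for v=4}. With $\elg = v-1$, the only unknown left in the iterated sum is the Wiener index of the final $4$-vertex tree, and this can be read off directly from that lemma.

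Because $\ga$ is a tree, every contraction preserves the tree structure, so no self-loops ever appear and every ordered tuple $(e_{i_1},\dots,e_{i_{v-4}})$ of distinct edges of $\ga$ is an admissible contraction. Moreover, the metrized graph $\oga_{i_1,\dots,i_{v-4}}$ depends only on the unordered set $I = \{e_{i_1},\dots,e_{i_{v-4}}\}$, and is a tree on $4$ vertices whose $3$ surviving edges (those in $J := \ee{\ga}\setminus I$) still have length $1$. So I would regroup the iterated sum of \thmref{thm main2 for tree} as
$$\sum_{(e_{i_1},\dots,e_{i_{v-4}})} W\bigl(\oga_{i_1,\dots,i_{v-4}}\bigr) \;=\; (v-4)! \sum_{\substack{I \subset \ee{\ga}\\ |I|=v-4}} W(\oga_I),$$
and then use $W(\oga_I) = 10$ when $\oga_I = P_4$ and $W(\oga_I) = 9$ when $\oga_I = S_4$, which follows from \lemref{lem wiener for v=4} with $a=b=c=1$.

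The core combinatorial step is the claim: $\oga_I = P_4$ if and only if the three edges of $J$ lie on a common path of $\ga$. For the easy direction, if $J$ lies along a simple path of $\ga$, then the image of that path in $\oga_I$ is a walk using exactly the three edges of $J$; since $\oga_I$ is a tree on $4$ vertices with $3$ edges, this walk must be $P_4$. For the converse, each vertex of $\oga_I$ is the image of a connected subtree of $\ga$, so a $P_4$ path in $\oga_I$ lifts to $\ga$ by inserting, between consecutive $J$-edges, a path through the appropriate subtree; since $\ga$ has no cycles, the resulting walk is a simple path in $\ga$ containing all three edges of $J$. Writing $N_P$ for the number of $3$-subsets of $\ee{\ga}$ lying on a common path of $\ga$, this gives
$$\sum_{\substack{I \subset \ee{\ga}\\ |I|=v-4}} W(\oga_I) \;=\; 10\,N_P + 9\!\left(\binom{v-1}{3} - N_P\right) \;=\; 9\binom{v-1}{3} + N_P.$$

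Substituting into \thmref{thm main2 for tree} with $\elg = v-1$ yields
$$W(\ga) \;=\; 9\binom{v-1}{3} + N_P - \frac{(3v-5)(v-4)(v-1)}{2},$$
and the proof is finished by the elementary identity
$$9\binom{v-1}{3} - \frac{(3v-5)(v-4)(v-1)}{2} \;=\; \frac{v-1}{2}\bigl[\,3(v-2)(v-3) - (3v-5)(v-4)\,\bigr] \;=\; (v-1)^2.$$
The second displayed form of the theorem is then just the restatement of $N_P$ via the path-characterization just established. The only step that is not purely mechanical is the if-and-only-if for $\oga_I = P_4$, which is where I expect the main (mild) obstacle to lie; the rest is bookkeeping together with the single algebraic check above.
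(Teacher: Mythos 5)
Your proposal is correct and follows essentially the same route as the paper: apply \thmref{thm main2 for tree} with $k=v-4$, evaluate $W(\oga_{i_1,\dots,i_{v-4}})$ as $9$ or $10$ via \lemref{lem wiener for v=4}, convert the ordered iterated sum to a sum over $(v-4)$-subsets using the $(v-4)!$ factor, and absorb the $9\binom{v-1}{3}$ contribution into $(v-1)^2$ by the same algebraic identity. The only difference is that you spell out the equivalence ``$\oga_I=P_4$ iff the three surviving edges lie on a common path'' and the arithmetic check, both of which the paper leaves implicit; your arguments for these are sound.
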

\begin{proof}
Applying \lemref{lem wiener for v=4} for this case, we obtain $W(\oga_{i_1,\dots, i_{v-4}})=10$ if $\oga_{i_1,\dots, i_{v-4}}=P_4$, and $W(\oga_{i_1,\dots, i_{v-4}})=9$ if $\oga_{i_1,\dots, i_{v-4}}=S_4$.

We note that
\begin{equation*}
\begin{split}
\frac{1}{(v-4)!} \sum_{\substack{e_{i_1} \in
\\ \ee{\ga}}} \sum_{\substack{e_{i_2} \in
\\ \ee{\oga_{i_1}}}}
\; \dots
\sum_{ \substack{e_{i_{v-4}} \in
\\ \ee{\oga_{i_1, \dots, i_{v-5}}}} }
1 = \frac{(v-1)(v-2)(v-3)}{6}.
\end{split}
\end{equation*}
Then we use \thmref{thm main2 for tree} with $\elg=(v-1)$ to obtain
\begin{equation*}
\begin{split}
W(\ga) = (v-1)^2+\frac{1}{(v-4)!}\sum_{\substack{e_{i_1}, \, e_{i_2}, \, \cdots, \,  e_{i_{v-4}} \in \ee{\ga} \\
\oga_{i_1,\dots, i_{v-4}}=P_4}} 1 = (v-1)^2+\sum_{\substack{\{ e_{i_1}, \, e_{i_2}, \, \cdots, \,  e_{i_{v-4}} \} \subset \ee{\ga} \\
\oga_{i_1,\dots, i_{v-4}}=P_4}} 1 .
\end{split}
\end{equation*}
We have the second equality above, because the number of permutations of $v-4$ edges $e_{i_1}, \, e_{i_2}, \, \cdots, \,  e_{i_{v-4}}$ is $(v-4)!$.
This gives the first equality in the theorem.

The second equality in the theorem follows from the first one.
\end{proof}
Note that \thmref{thm main3 for tree} in a sense gives information about how far a graph is away from being a star graph.

As a corollary to \thmref{thm main3 for tree}, we obtain the following well-known result:
\begin{corollary}\label{cor wiener upper and lower}
For any metrized graph $\ga$ with $v$ vertices, we have
$$(v-1)^2 = W(S_v) \leq W(\ga) \leq W(P_v)=\frac{v(v^2-1)}{6}.$$
\end{corollary}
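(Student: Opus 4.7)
The plan is to derive both bounds as an essentially immediate consequence of \thmref{thm main3 for tree}, which I would rewrite as
\[
W(\ga) = (v-1)^2 + N(\ga),
\]
where $N(\ga)$ denotes the number of 3-element subsets $\{e_{i_1}, e_{i_2}, e_{i_3}\} \subset \ee{\ga}$ whose three edges all lie on a common simple path of $\ga$. (I read the statement as assuming, as in the preceding theorem, that $\ga$ is a tree with unit edge lengths.)

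For the lower bound, I would simply observe that $N(\ga) \geq 0$, which gives $W(\ga) \geq (v-1)^2$ for free. To confirm the value $W(S_v) = (v-1)^2$, I would specialize to the star: every simple path in $S_v$ has at most two edges (a leaf, the center, another leaf), so no three edges of $S_v$ ever fit into a single path. Hence $N(S_v) = 0$ and $W(S_v) = (v-1)^2$.

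For the upper bound, I would use the trivial estimate $N(\ga) \leq \binom{|\ee{\ga}|}{3} = \binom{v-1}{3}$, valid because any tree on $v$ vertices has exactly $v-1$ edges. A short algebraic simplification,
\[
(v-1)^2 + \binom{v-1}{3} = \frac{(v-1)\bigl[6(v-1) + (v-2)(v-3)\bigr]}{6} = \frac{(v-1)v(v+1)}{6} = \frac{v(v^2-1)}{6},
\]
then supplies the right-hand side. Equality is attained at $\ga = P_v$: every triple of edges of the path graph automatically lies on the full path itself, so $N(P_v) = \binom{v-1}{3}$ and $W(P_v) = v(v^2-1)/6$.

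There is no real obstacle in this argument; the heavy lifting is entirely absorbed by the successive-contraction machinery encoded in \thmref{thm main3 for tree}. The only point demanding a moment's care is the observation that a tree on $v$ vertices has exactly $v-1$ edges, so the crude bound on $N(\ga)$ is genuinely $\binom{v-1}{3}$, and the final arithmetic simplification is routine.
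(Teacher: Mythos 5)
Your proof is correct and follows essentially the same route as the paper: both bounds come from observing that the summation in \thmref{thm main3 for tree} is nonnegative (with value $0$ exactly for the star) and at most $\binom{v-1}{3}$ (attained exactly for the path). The only addition is your explicit verification that $(v-1)^2+\binom{v-1}{3}=\frac{v(v^2-1)}{6}$, which the paper leaves implicit.
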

\begin{proof}
No path in $S_v$ can contain $3$ edges, so $W(S_v)=(v-1)^2$ by using \thmref{thm main3 for tree}. Since this is the case with minimum value $0$ of the summation in the formula of \thmref{thm main3 for tree}, we obtain $W(S_v) \leq W(\ga)$.

On the other hand, any $3$ edges in $P_v$ is part of a path in $P_v$, namely the path is $P_v$ itself. Thus, the summation in the formula of \thmref{thm main3 for tree} is $\binom{v-1}{3}$, and so $W(P_v)=(v-1)^2+\binom{v-1}{3}$ by \thmref{thm main3 for tree}. We note that $\binom{v-1}{3}$ is the maximum value of the summation, so $W(\ga) \leq W(P_v)$.
\end{proof}

We recall the following result due to Doyle and Graver \cite{DG} to compare with \thmref{thm main3 for tree}:
\begin{theorem}\cite[Theorem 9]{DEG} \label{thm away from path}
Let metrized graph $\ga$ be a tree with $v$ vertices, and let $v_1$, $v_2$, $\dots$, $v_{\va(p)}$ be the number vertices in the connected components of the graph obtained from $\ga$ by deleting the edges connected to a vertex $p$. Then
$$W(\ga)=\frac{v(v^2-1)}{6}- \sum_{\substack{ p \in \vv{\ga}, \\  \va(p) \geq 3 }} \, \, \, \sum_{1 \leq i < j < k \leq \va(p)} v_i v_j v_k.$$
\end{theorem}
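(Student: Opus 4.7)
The plan is to combine \thmref{thm main3 for tree} with a combinatorial bijection on triples of edges. From the second formula in \thmref{thm main3 for tree}, one has $W(\ga) = (v-1)^2 + N_P$, where $N_P$ is the number of $3$-element subsets of $\ee{\ga}$ lying on a common path of $\ga$. The total number of $3$-subsets of the $v-1$ edges is $\binom{v-1}{3}$, so setting $N_{NP} := \binom{v-1}{3} - N_P$ and using the elementary identity $(v-1)^2 + \binom{v-1}{3} = \frac{v(v^2-1)}{6}$, one obtains
\[
W(\ga) = \frac{v(v^2-1)}{6} - N_{NP}.
\]
It therefore suffices to prove
\[
N_{NP} = \sum_{\substack{p \in \vv{\ga}\\ \va(p) \geq 3}}\ \sum_{1 \leq i < j < k \leq \va(p)} v_i v_j v_k.
\]

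I would first assign to every edge $e \in \ee{\ga}$ a canonical \emph{branch at $p$}: if $e$ is not incident to $p$, both endpoints of $e$ lie in a common component $T_i$ of the graph obtained by deleting all edges incident to $p$, and we put $e$ in branch $i$; if $e$ is incident to $p$ with its other endpoint in $T_i$, we again put $e$ in branch $i$. A quick count shows branch $T_i$ contains exactly $v_i = (v_i - 1) + 1$ edges (the edges of the subtree $T_i$ itself plus the unique edge joining $p$ to $T_i$).

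The key lemma to establish is: \emph{given three distinct edges $\{e_1, e_2, e_3\} \subset \ee{\ga}$ not on a common path of $\ga$, there exist a unique vertex $p \in \vv{\ga}$ with $\va(p) \geq 3$ and a unique triple $1 \leq i < j < k \leq \va(p)$ such that $e_1, e_2, e_3$ occupy the three branches $T_i, T_j, T_k$ at $p$, one edge per branch.} For existence, take the minimal subtree $S \subset \ga$ containing $e_1, e_2, e_3$; since these three edges are not collinear, $S$ is not a path, and a short degree-counting argument (using that $S$ has at most three leaves) shows $S$ has a unique vertex $p$ of $S$-degree exactly $3$ whose removal partitions $S$ into three subtrees each containing exactly one of $e_1, e_2, e_3$. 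Since $\va(p) \geq \deg_S(p) \geq 3$, the vertex $p$ is a valid branching vertex in $\ga$. Conversely, three edges in three distinct branches at $p$ cannot lie on a common path, since any simple path through $p$ in $\ga$ visits at most two branches at $p$. For uniqueness, if $p \neq q$ were both branching vertices, then every edge in a branch of $p$ not containing $q$ lies in the single branch at $q$ containing $p$; hence at least two of $e_1, e_2, e_3$ lie in that branch at $q$, contradicting the hypothesis that the triple occupies three distinct branches at $q$.

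Granting the lemma, each 3-subset counted by $N_{NP}$ corresponds bijectively to a tuple $(p, i, j, k, e_i, e_j, e_k)$ with $p \in \vv{\ga}$, $\va(p) \geq 3$, $1 \leq i < j < k \leq \va(p)$, and $e_\ell$ an edge drawn from branch $T_\ell$ for $\ell \in \{i, j, k\}$. Since branch $T_\ell$ contains $v_\ell$ edges, summing yields $N_{NP} = \sum_p \sum_{i<j<k} v_i v_j v_k$, which combined with the expression for $W(\ga)$ gives the theorem. The main obstacle is the uniqueness clause of the lemma: a tree can have many vertices of valence $\geq 3$, and it is not a priori clear which (if any) serves as the branching vertex of a given triple. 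The uniqueness argument above relies crucially on the acyclicity of $\ga$ through the unique $p$-$q$ path.
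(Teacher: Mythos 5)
The paper offers no proof of this statement: it is quoted verbatim from Doyle and Graver (\cite[Theorem 9]{DEG}) purely ``to compare with'' \thmref{thm main3 for tree}. Your derivation is therefore a genuinely different, self-contained route, and it is correct. From the second identity of \thmref{thm main3 for tree} you write $W(\ga)=(v-1)^2+N_P$, where $N_P$ counts the $3$-subsets of $\ee{\ga}$ lying on a common path; the elementary identity $(v-1)^2+\binom{v-1}{3}=\frac{v(v^2-1)}{6}$ (already used implicitly in \corref{cor wiener upper and lower}) reduces the theorem to showing that the complementary count $N_{NP}=\binom{v-1}{3}-N_P$ equals $\sum_{p,\,\va(p)\geq 3}\sum_{i<j<k}v_iv_jv_k$. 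Your branch decomposition is right (each branch $T_i$ at $p$ carries exactly $(v_i-1)+1=v_i$ edges, consistent with $\sum_i v_i=v-1$), the existence half of your key lemma is sound (the minimal subtree spanning a non-collinear triple has exactly three leaves whose leaf edges are the three given edges, hence is a spider with a unique degree-$3$ center), and the uniqueness half correctly exploits that all branches at $p$ except the one toward $q$ collapse into a single branch at $q$. The payoff of your route is that it turns the paper's informal remark --- that \thmref{thm main3 for tree} measures deviation from $S_v$ while the Doyle--Graver formula measures deviation from $P_v$ --- into a precise statement: the two formulas are complementary counts of the $\binom{v-1}{3}$ edge-triples. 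Two small caveats you should make explicit: the statement implicitly assumes unit edge lengths (as does \thmref{thm main3 for tree}, which you invoke), and since \thmref{thm main3 for tree} is proved in the paper via \thmref{thm main2 for tree} with $k=v-4$, your argument formally requires $v\geq 5$; the cases $v\leq 4$ need a (trivial) direct verification.
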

Note that \thmref{thm away from path} somewhat explains how far a graph is away from being a path graph.

Next, we restate \lemref{lem Kirchoff index successive contraction id1} for trees. For an edge $e_i$ with end points $\pp$ and $\qq$, let
$m_i$ be the number of vertices that are in the connected component of $\ga-e_i$ containing $\pp$, and let
$k_i$ be the number of vertices that are in the connected component of $\ga-e_i$ containing $\qq$. Then we have $m_i+k_i=v$.
\begin{theorem}\label{thm wiener for tree1 multip}
Let metrized graph $\ga$ be a tree with $v$ vertices. Suppose each edge length of $\ga$ is $1$. Then we have
\begin{equation*}
\begin{split}
W(\ga)= \sum_{e_i \in \ee{\ga}} m_i \cdot k_i.
\end{split}
\end{equation*}
\end{theorem}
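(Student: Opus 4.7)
The plan is to specialize \lemref{lem Kirchoff index successive contraction id1} to the tree setting and to exploit the drastic simplifications that occur when every edge is a bridge. By \eqnref{eqn wiener and Kirchhoff}, $W(\ga)=Kf(\ga)$, so it suffices to evaluate $Kf(\ga)$ using the iterated contraction formula
\begin{equation*}
Kf(\ga) = \frac{1}{(v-2)!} \sum_{e_{i_1} \in \ee{\ga}}\frac{R_{i_1}}{L_{i_1}+R_{i_1}} \dots \sum_{e_{i_{v-2}} \in \ee{\oga_{i_1,\dots,i_{v-3}}}} \frac{R_{i_{v-2}}}{L_{i_{v-2}}+R_{i_{v-2}}} \, m \cdot k \cdot r_{\oga_{i_1,\dots, i_{v-2}}}(p',q').
\end{equation*}
Since $\ga$ is a tree, each $e_{i_j}$ is a bridge in $\oga_{i_1,\dots,i_{j-1}}$ (contracting edges of a tree produces another tree), so by \remref{rem notationRi} every weight $\frac{R_{i_j}}{L_{i_j}+R_{i_j}}$ becomes $1$ in the limit $R_{i_j}\to\infty$.

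Next, I would analyze the final factor $m \cdot k \cdot r_{\oga_{i_1,\dots,i_{v-2}}}(p',q')$. A tree on $v$ vertices has exactly $v-1$ edges, so after $v-2$ contractions exactly one edge $e_i$ survives; the contracted graph then consists of two vertices $p'$ and $q'$ joined by this single edge, whose length is $1$, and thus $r_{\oga_{i_1,\dots,i_{v-2}}}(p',q')=1$. Moreover, $p'$ and $q'$ are precisely the images of the two components of $\ga - e_i$, so the integers $m$ and $k$ attached to this contraction equal $m_i$ and $k_i$ as defined before the theorem statement.

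Finally, I would reorganize the iterated sum by grouping ordered sequences $(e_{i_1},\dots,e_{i_{v-2}})$ according to which edge $e_i$ survives. Because trees contain no cycles or multiple edges, no intermediate contraction can create a self-loop, and hence every ordering of the $v-2$ non-survivor edges is admissible. Thus for each choice of survivor $e_i$ there are exactly $(v-2)!$ admissible orderings of the remaining edges, each contributing $m_i \cdot k_i$. The prefactor $\frac{1}{(v-2)!}$ cancels this count, leaving $Kf(\ga)=\sum_{e_i\in\ee{\ga}} m_i \cdot k_i$, which finishes the proof by \eqnref{eqn wiener and Kirchhoff}.

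The main (though mild) obstacle is the combinatorial bookkeeping: one must verify that every ordered selection of $v-2$ distinct edges from the tree yields an admissible contraction, so that the naive count of $(v-2)!$ orderings per surviving edge is correct. Once the cycle-free structure of trees is invoked, this reduces to a one-line check, and the rest is just tracking limits of the bridge weights.
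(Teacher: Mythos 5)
Your proposal is correct and follows essentially the same route as the paper: both specialize \lemref{lem Kirchoff index successive contraction id1} to trees, observe that every bridge weight $\frac{\ri}{\li+\ri}$ tends to $1$ and that the surviving edge gives $r_{\oga_{i_1,\dots,i_{v-2}}}(p',q')=1$ with $m=m_i$, $k=k_i$, and then cancel the $(v-2)!$ orderings against the prefactor. Your write-up is in fact slightly more careful than the paper's (which misprints the limit of the bridge weight as $0$ and leaves the admissibility of all edge orderings implicit).
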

\begin{proof}
Each edge $e_i$ is bridge, so $\frac{\ri}{\li+\ri} \longrightarrow 0$. Moreover,
$\oga_{i_1,\dots, i_{v-2}}$ is the edge that is not contracted, so we have $r_{\oga_{i_1,\dots, i_{v-2}}}(p',q')=1$.
Therefore, \lemref{lem Kirchoff index successive contraction id1} implies
$$W(\ga)=\frac{1}{(v-2)!}\sum_{\substack{e_{i_1}, \, e_{i_2}, \, \cdots, \,  e_{i_{v-2}} \in \ee{\ga}}} m \cdot k,$$
where $m$ and $k$ are as defined before. Considering the permutations of the contracted edges, we can rewrite this as
\begin{equation*}
\begin{split}
W(\ga)=\sum_{\substack{ \{ e_{i_1}, \, e_{i_2}, \, \cdots, \,  e_{i_{v-2}} \}  \subset \ee{\ga}}} m \cdot k
= \sum_{e_i \in \ee{\ga}} m \cdot k.
\end{split}
\end{equation*}
Now, suppose $\oga_{i_1,\dots, i_{v-2}}=e_i$. Then the number of vertices contracted into $\pp$ is nothing but $m_i$, i.e., $m=m_i$. Similarly, the number of vertices contracted into $q'$ is $k_i$. That is, $k=k_i$.
This completes the proof.
\end{proof}
Note that \thmref{thm wiener for tree1 multip} was known previously \cite[page 218]{DEG} and \cite{HW}.

\textbf{Example II:}
Let $\beta_1$ and $\beta_2$  be the metrized graphs with $s+t+2$ and $s+t+3$ vertices, respectively. These are illustrated in \figref{fig ex1}. Suppose $s \geq 0$, $t \geq 0$ and each edge length in $\beta_1$ and $\beta_2$ is equal to 1. By applying \thmref{thm main3 for tree}, we obtain

$W(\beta_1)=(s+t+1)^2+\binom{s}{1} \binom{t}{1}=(s+t+1)^2+s\cdot t$.

$W(\beta_2)=(s+t+2)^2+\binom{s}{1} \binom{2}{1} \binom{t}{1}+\binom{s}{1} \binom{2}{2} \binom{t}{0}+\binom{s}{0} \binom{2}{2} \binom{t}{1}=(s+t+2)^2+2s\cdot t+s+t$.
\begin{figure}
\centering
\includegraphics[scale=0.6]{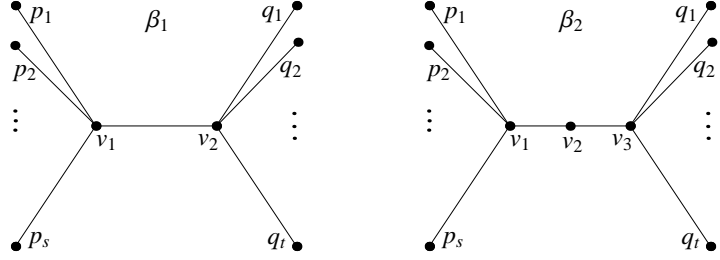} \caption{Tree metrized graphs $\beta_1$ and $\beta_2$.} \label{fig ex1}
\end{figure}

We note that these results agree with the results given in \cite[page 234]{DEG} (as $\beta_1=D(s+t+2,s,t)$ and $\beta_2=D(s+t+3,s,t)$, where $D(v,s,t)$ is the graph defined as in \cite[page 234]{DEG}).

\textbf{Example III:}
In this example, we work with metrized graphs $\beta_3$ and $\beta_4$ illustrated in  \figref{fig ex2}. $\beta_3$ and $\beta_4$ have $s+t+k+4$ and $s+t+k+m+4$ vertices, respectively. Suppose $s \geq 0$, $t \geq 0$, $k \geq 0$, $m \geq 0$ and each edge length in these graphs is equal to 1. By applying \thmref{thm main3 for tree}, we obtain
\begin{equation*}
\begin{split}
W(\beta_3)&=(s+t+k+3)^2+\binom{s}{1} \binom{2}{1} \binom{k}{1}+\binom{s}{1} \binom{2}{2} + \binom{2}{2} \binom{k}{1}
+\binom{s}{1} \binom{2}{1} \binom{t}{1}\\
&\quad +\binom{s}{1} \binom{2}{2} +  \binom{2}{2} \binom{t}{1} +\binom{k}{1} \binom{2}{1} \binom{t}{1}+\binom{k}{1} \binom{2}{2} + \binom{2}{2} \binom{t}{1}\\
&=(s+t+k+3)^2+2(sk+st+kt+s+t+k).
\end{split}
\end{equation*}
Now, to compute $W(\beta_4)$ we can use the computation used in obtaining $W(\beta_3)$. Namely, when we compute the number of three edges that are part of a path in $\beta_4$,  we can divide the edges in two groups: The ones having an end point in $\{ u_1, u_2, \cdots, u_m \}$
and the ones with no end points in this set.
\begin{equation*}
\begin{split}
W(\beta_4)&=(s+t+k+m+3)^2+2(sk+st+kt+s+t+k)+\binom{m}{1} \Big[ \binom{s}{1} + \binom{k}{1}+ \binom{t}{1} \Big]\\
&=(s+t+k+m+3)^2+2(sk+st+kt)+(m+2)(s+t+k).
\end{split}
\end{equation*}

\begin{figure}
\centering
\includegraphics[scale=0.6]{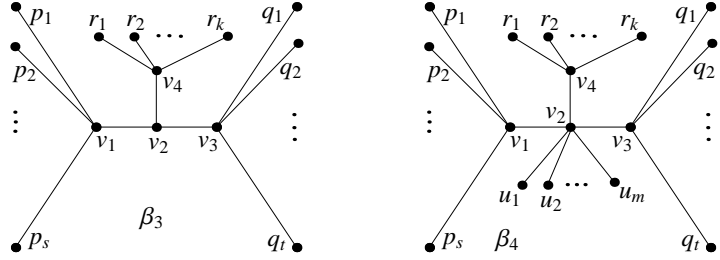} \caption{Tree metrized graphs $\beta_3$ and $\beta_4$.} \label{fig ex2}
\end{figure}

\textbf{Example IV:}
In this case, we work with metrized graph $\beta_5$ illustrated in  \figref{fig ex3}. $\beta_5$ has $v=s+t+k+m+n+5$ vertices. Suppose $s \geq 0$, $t \geq 0$, $k \geq 0$, $m \geq 0$, $n \geq 0$ and each edge length of $\beta_5$ is equal to 1. By applying \thmref{thm main3 for tree} and using the computation of $W(\beta_4)$, we obtain

$$W(\beta_5)=(v-1)^2+2(sk+st+kt+mn+kn+sn)+(n+2)(s+k)+(m+2)(s+k+t+1)+n(t+5).$$
The details are left as an exercise to the reader.
\begin{figure}
\centering
\includegraphics[scale=0.85]{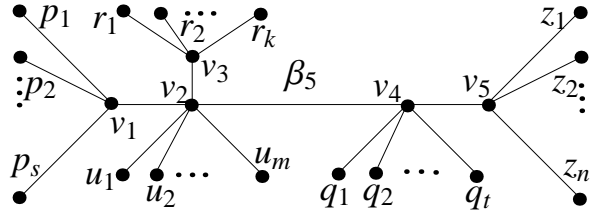} \caption{Tree metrized graph $\beta_5$.} \label{fig ex3}
\end{figure}

\textbf{Example V:}
In this case, we work with metrized graph $\beta_6$ illustrated in  \figref{fig ex3}. $\beta_6$ has $v=s+t+k+m+n+h+6$ vertices. Suppose $s \geq 0$, $t \geq 0$, $k \geq 0$, $m \geq 0$, $n \geq 0$, $h \geq 0$ and each edge length of $\beta_6$ is equal to 1. By applying \thmref{thm main3 for tree} and using the computation of $W(\beta_5)$, we obtain
\begin{equation*}
\begin{split}
W(\beta_6)&=(v-1)^2+2(sk+st+kt+mn+kn+sn)+(n+4)(s+k)+n(t+6)\\
&\quad +(m+2)(s+k+t+2)+h(3s+3k+2n+2m+t+6).
\end{split}
\end{equation*}
The details are left as an exercise to the reader.

\begin{figure}
\centering
\includegraphics[scale=0.85]{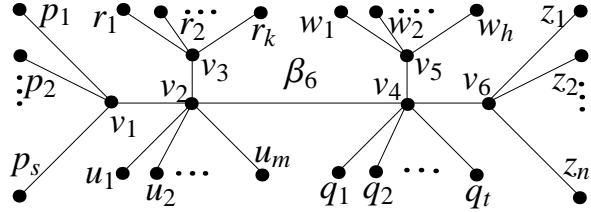} \caption{Tree metrized graph $\beta_6$.} \label{fig ex4}
\end{figure}

\textbf{Problem I:} Show that
the function $F:\NN^5 \longrightarrow \NN$ given by
$F(s,t,k,m,n)=(s+t+k+m+n+4)^2+2(sk+st+kt+mn+kn+sn)+(n+2)(s+k)+(m+2)(s+k+t+1)+n(t+5)$
takes every integer bigger than $557$, and that the only integers not assumed by $F$ are the following $89$ numbers:

$\{1,2,3,4,5,6,7,8,9,10,11,12,13,14,15,16,17,19,20,21,22,23,24,25,26,27,30,33,34,35,$
$36,37,38,39,41,43,45,47,49,51,52,53,55,56,60,61,69,73,75,77,78,79,81,83,85,87,89,91,$
$99,101,106,113,125,129,131,133,135,141,143,147,149,157,159,165,197,199,203,213,217,$
$219,281,285,293,301,325,357,501,509,557\}$.

We checked by a computer program that any integer not in the list above and less than $20000$ can be a value of $F$.

\textbf{Problem II:} Show that
the function $G:\NN^6 \longrightarrow \NN$ given by
$G(s,t,k,m,n,h)=(s+t+k+m+n+h+5)^2+2(sk+st+kt+mn+kn+sn)+(n+4)(s+k)+(m+2)(s+k+t+2)+n(t+6)+h(3s+3k+2n+2m+t+6) $
takes every integer bigger than $301$, and that the only integers not assumed by $G$ are the following $104$ numbers:

\footnotesize
$\{1,2,3,4,5,6,7,8,9,10,11,12,13,14,15,16,17,18,19,20,21,22,23,24,25,26,27,28,30,31, 32,33,34,35, \quad$ $36,37,38,39,40,41,43,44,45,47,48,49,50,51,52,53,54,55,56,59,60,61,64,66,69,70,71,72,73,75,77,78, \quad$ $79,81,83,85,87,89,91,95,98,99,101,102,106,113,119,124,127,129,131,133,135,139,141,143,147,149,157, \qquad $ $159,165,197,199,203,213,217,219,279,293,301\}$.
\normalsize

Again, we tested by a computer program that any integer not in the list above and less than $20000$ can be a value of $G$.

The following theorem was conjectured in \cite{LG} and \cite{GY}, and proved in both \cite{WY} and \cite{W}.
\begin{theorem}\label{thm wiener conj}
Except for exactly the following $49$ positive integers, every positive integer is the Wiener
index of some tree.

$\{2, 3, 5, 6, 7, 8, 11, 12, 13, 14, 15, 17, 19, 21, 22, 23, 24, 26, 27, 30, 33, 34, 37, 38, 39, 41,
43, 45, 47,$
$ 51, 53, 55, 60, 61, 69, 73, 77, 78, 83, 85, 87, 89, 91, 99, 101, 106, 113, 147,
159 \}$.
\end{theorem}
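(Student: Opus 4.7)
The plan is to derive Theorem \ref{thm wiener conj} from the explicit Wiener-index formulas of Examples IV and V, namely $F(s,t,k,m,n)=W(\beta_5)$ and $G(s,t,k,m,n,h)=W(\beta_6)$, combined with a finite combinatorial check for small cases. The statement has two halves, which I would handle separately: the non-realizability of the $49$ exceptional integers, and the realizability of every other positive integer.

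For the non-realizability half, I would use the lower bound $W(\ga)\ge (v-1)^2$ from Corollary \ref{cor wiener upper and lower}. If an integer $N$ is the Wiener index of a tree on $v$ vertices then $v\le \lfloor\sqrt{N}\rfloor+1$, and since the largest listed integer is $159$ it suffices to enumerate all trees on at most $13$ vertices. Theorem \ref{thm wiener for tree1 multip} provides the efficient formula $W(\ga)=\sum_{e_i\in\ee{\ga}} m_i k_i$, which turns this enumeration into a purely finite and mechanical task; one then simply verifies that none of the $49$ listed integers appears among the realized values.

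For the realizability half, I would solve Problems I and II as formulated in the paper. Granting their solutions, every integer $N > 557$ is of the form $W(\beta_5)$, and every $N > 301$ outside a small exceptional set is of the form $W(\beta_6)$. For $N \le 557$ I would compare the three lists: the $49$-element exceptional set is contained in both the $89$-element $F$-missed set and the $104$-element $G$-missed set, so any $N$ that is $F$-missed but outside the $49$-list either lies in the $G$-hit set, in which case $N = W(\beta_6)$ for some parameter tuple, or is realized by an explicit small path or star, for instance $1=W(P_2)$, $4=W(P_3)$, $9=W(S_4)$, $25=W(S_6)$, $35=W(P_6)$, $49=W(S_8)$, and similarly for the finitely many remaining cases found by direct search on trees with few vertices.

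The hard part will be Problems I and II themselves: one must show that an explicit integer-valued polynomial in $5$ (resp.\ $6$) nonnegative variables is surjective onto $\NN_{>557}$ (resp.\ $\NN_{>301}$), with the prescribed $89$- and $104$-element exceptional sets. My approach would be to write $F=(s+t+k+m+n+4)^2+Q(s,t,k,m,n)$ with $Q$ a nonnegative quadratic form, then freeze suitable variables to produce one- or two-parameter slices whose $F$-values hit every sufficiently large integer in each residue class modulo a small modulus, closing the remaining finite range by the computer search up to $20000$ already indicated in the paper. The extra parameter $h$ of $G$ supplies the additional degrees of freedom needed to cover residue classes inaccessible to $F$, and isolating the exact $89$ and $104$ missing values is the delicate bookkeeping at the heart of the argument.
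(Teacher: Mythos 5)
You should first be aware that the paper does not prove this statement at all: it is quoted as a known theorem, conjectured in \cite{LG} and \cite{GY} and proved in \cite{WY} and \cite{W}, and the paper's only contribution here is the observation that a solution to Problem I or Problem II \emph{would} yield another proof. Your proposal is therefore not an alternative to the paper's argument but an attempt to carry out exactly the programme the paper leaves open, and the gap in your write-up sits precisely where the paper's open problems sit. The non-realizability half is fine in principle: the bound $W(\ga)\ge(v-1)^2$ from \corref{cor wiener upper and lower} reduces the exclusion of the $49$ listed integers to a finite enumeration of trees on at most $13$ vertices, which \thmref{thm wiener for tree1 multip} makes mechanical.

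The realizability half, however, is not proved. Everything rests on Problems I and II, and for those you offer only a strategy: freeze variables, hit residue classes modulo some small modulus, and close the finite range by the computer search up to $20000$. No modulus is named, no slice of $F$ or $G$ is exhibited, and no argument is given that the chosen slices cover every residue class beyond $557$ (respectively $301$); the check up to $20000$ says nothing about the infinite tail, and the paper itself is explicit that this check is evidence, not proof. Isolating the exact $89$- and $104$-element exceptional sets, and verifying that each integer in the $89$-list but outside the $49$-list is realized by $\beta_6$ or by some small tree, is further finite bookkeeping that you describe but do not carry out. Until the surjectivity of $F$ onto $\NN_{>557}$ (or of $G$ onto its claimed range) is actually established, the proposal assumes what it needs to prove; if you only want \thmref{thm wiener conj} itself, the honest route at present is to cite \cite{WY} or \cite{W}, as the paper does.
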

%

Note that a positive solution to any of Problem I and Problem II above will be another proof of \thmref{thm wiener conj}.



\textbf{Acknowledgements:} This work is supported by The Scientific and Technological Research Council of Turkey-TUBITAK (Project No: 110T686).


\begin{thebibliography}{999}



\bibitem[1]{BF} M. Baker and X. Faber, Metrized graphs, Laplacian operators, and
electrical networks, {\em Quantum graphs and their applications}, 15--33,
Contemp. Math., 415, Amer. Math. Soc., Providence, RI, (2006).

\bibitem[2]{BRh} M. Baker and R. Rumely, Harmonic analysis on metrized graphs, {\em Canadian
J. Math}: 59, no. 2, 225--275, (2007).


\bibitem[3]{CR} T. Chinburg and R. Rumely, The capacity pairing,
{\em J. reine angew. Math.}  434, 1--44, (1993).

\bibitem[4]{C1} Z. Cinkir, {\em The Tau Constant of Metrized Graphs},
Thesis at University of Georgia, (2007).

\bibitem[5]{C2}Z. Cinkir, Generalized Foster's identities, {\em International Journal of Quantum Chemistry}, Volume 111, Issue 10, (2011), 2228–-2233.

\bibitem[6]{C3} Z. Cinkir, The tau constant and the discrete Laplacian matrix of a metrized graph, {\em European Journal of Combinatorics}, Volume 32, Issue 4, (2011), 639--655.

\bibitem[7]{C4} Z. Cinkir, The tau constant of a metrized graph and its behavior under graph operations, {\em The Electronic Journal of Combinatorics}, Volume 18 (1), (2011), P81.

\bibitem[8]{C5} Z. Cinkir, The tau constant and the edge connectivity of a metrized graph,
{\em The Electronic Journal of Combinatorics}, Volume 19, Issue 4, (2012), P46.

\bibitem[9]{C6} Z. Cinkir, Deletion and contraction identities for the resistance values and the Kirchhoff index,
{\em International Journal of Quantum Chemistry}, Volume:111, Issue:15, (2011), 4030--4041.


\bibitem[10]{C7} Z. Cinkir, Zhang's Conjecture and the Effective Bogomolov Conjecture over function fields, {\em Inventiones Mathematicae}, Volume 183, Number 3, (2011) 517--562.

\bibitem[11]{DEG} A. A. Dobrynin, R. Entringer and I. Gutman, Wiener Index of Trees: Theory and Applications, {\em Acta Applicandae Mathematicae}, 66 (2001) 211–-249.


\bibitem[12]{DG}J. K. Doyle and J. E. Graver, Mean distance in a graph, {\em Discrete Math. } 7, (1977), 147–-154.


\bibitem[13]{GY} I. Gutman and Y. Yeh, The sum of all distances in bipartite graphs, {\em Math. Slovaca} 45, (1995), 327–-334.

\bibitem[14]{HW} H. Wiener, Structural determination of paraffin boiling points,  {\em J. Amer. Chem. Soc.} 69, (1947),
17–-20.

\bibitem[15]{KR} D. J. Klein and M. Randi\'c, Resistance
distance, {\em Journal Mathematical Chemistry}, 12, (1993), 81--95.

\bibitem[16]{LG} M. Lepovi\'{c} and I. Gutman, A collective property of trees and chemical trees,  {\em J. Chem. Inf. Comput. Sci.} 38, (1998), 823–-826.

\bibitem[17]{LNT} I. Lukovits, S. Nicoli\'{c} and N. Trinajsti\'{c}, Resistance distance in regular graphs, {\em International Journal of Quantum Chemistry}, Volume 71, Issue 3, (1999) 217–-225.

\bibitem[18]{RumelyBook} R. Rumely, {\em Capacity Theory on Algebraic Curves}, Lecture Notes in
Mathematics 1378, Springer-Verlag, Berlin-Heidelberg-New York, (1989).

\bibitem[19]{W} S. G. Wagner, A Class of Trees and its Wiener Index, {\em Acta. Appl. Math.}, 91, (2006) 119–132.

\bibitem[20]{WY} H. Wang and G. Yu, All but 49 Numbers are Wiener Indices of Trees, {\em Acta. Appl. Math.}  92, (2006) 15–20.


\end{thebibliography}
\end{document}